\renewcommand{\P}{\mathbb{P}}
\newcommand{\G}{\mathbb{G}}
\newcommand{\Z}{\mathbb{Z}}
\newcommand{\sO}{\mathscr{O}}
\newcommand{\st}{\text{ s.t. }}
\newcommand{\aut}{\operatorname{Aut}}
\newcommand{\cN}{\mathcal{N}}
\newcommand{\cG}{\mathcal{G}}
\newcommand{\cF}{\mathcal{F}}
\newcommand{\cB}{\mathcal{B}}
\newcommand{\cP}{\mathcal{P}}
\newcommand{\cH}{\mathcal{H}}
\newcommand{\cT}{\mathcal{T}}
\newcommand{\X}{\mathcal{X}}
\newcommand{\sT}{\mathscr{T}}
\newcommand{\Wu}{\mathcal{W}_{\operatorname{U}}}
\newcommand{\Wn}{\mathcal{W}_{\operatorname{N}}}
\newtheoremstyle{dotlessP}{}{}{\color{blue}}{}{\color{blue}\bfseries}{}{ }{}
\theoremstyle{plain}
\newtheorem{teor}{Theorem}[section]
\newtheorem{prop}[teor]{Proposition}
\newtheorem{lemma}[teor]{Lemma}
\newtheorem{oss}[teor]{Remark}
\newtheorem{defn}[teor]{Definition}
\title{Non uniform projections of surfaces in $\P^3$}
\author{Alice Cuzzucoli, Riccardo Moschetti and Maiko Serizawa}
\address{A. Cuzzucoli\\ Department of Mathematics, University of Warwick, Coventry, CV4 7AL, Warwickshire, England}
	\email{a.cuzzucoli@warwick.ac.uk}
\address{R. Moschetti\\Department of Mathematics and Natural Sciences, University of Stavanger \\NO-4036 Stavanger, Norway}
\email{riccardo.moschetti@uis.no}
\address{M. Serizawa \\ School of Mathematics and Statistics, University of Sheffield\\
Western Bank, Sheffield, S10 2TN, South Yorkshire, England}
	\email{mserizawa1@sheffield.ac.uk}
\thanks{}
\subjclass[2010]{14H30,14H50,14J10,14J70}
\keywords{Monodromy, Projections, Uniform points, Focal points, Filling families.}
\begin{document}
\begin{abstract}
Consider the projection of a smooth irreducible surface in $\P^3$ from a point.
The uniform position principle implies that the monodromy group of such a projection from a general point in $\P^3$ is the whole symmetric group. We will call such points uniform. Inspired by a result of Pirola and Schlesinger for the case of curves, we proved that the locus of non-uniform points of $\P^3$ is at most finite.
\end{abstract}
\maketitle
\section{Introduction}

In this paper we study the monodromy groups of projections of smooth irreducible surfaces in $\P^3$ from points. Our particular interest lies in finding whether the monodromy group of a projection $\pi_L$ of a surface from a point $L$ is the whole symmetric group or not. In the former case, we call $L$ \textit{uniform}, and in the latter case, \textit{not uniform}. The very same terminology is also used for the whole projection, calling $\pi_L$ uniform if and only if the monodromy group $M(\pi_L)$ is the whole symmetric group.
Much work has been done in the literature for the case of curves to determine which groups can arise as the monodromy group of some projection. The papers \cite{GM_general}, \cite{GuMa} and \cite{GuSh} show that if $C$ is a general curve of genus greater than $3$, then the monodromy group of a projection $C \to \P^1$ is either the whole symmetric group or the alternating group. A general curve of degree $d$ and genus $g$ admits a covering with symmetric monodromy group if $d \geq g/2+1$. Moreover, this holds for every curve if $d \geq g+2$. The existence of a covering with alternating monodromy group has been covered in \cite{MagVol} for general curves with $d \geq 2g+1$ and in \cite{ArPi} for curves with $d \geq 12g+4$.

The monodromy groups arising from such projections have been studied in different contexts; for instance, when the subvariety is a curve all the possible monodromy groups were classified by Miura and Yoshihara in \cite{Mi1}, \cite{Mi2}, \cite{Mi3}.

In a more general context, given a subvariety $X$ of arbitrary dimension in $\P^r$, one can study the locus $\Wu(X)$ of all possible linear subspaces of $\P^r$ for which the monodromy group of the projection is the full symmetric group. We say that such linear subspaces of $\P^r$ are \textit{uniform}. Similarly, we will denote by $\Wn(X)$ the locus of \textit{non-uniform} linear subspace of $\P^r$. Indeed, each subvariety $X$ of $\P^r$ carries its own locus $\Wu(X)$ of uniform subspaces in the Grassmannian $\G(r-n-1, \P^r)$, where $n$ denotes the dimension of $X$.
Consider the case where $X$ is an irreducible algebraic curve in $\P^r$. The so-called Uniform Position Principle implies that a general $(r-2)$-plane of $\P^r$ is
uniform. A classical reference of this result is the work of Harris \cite{HarrisGenus}, where the Uniform Position Principle is proved in Section $2$. Pirola and Schlesinger strengthen this statement in \cite{PS} by proving that, in the above setting, the locus of non-uniform $(r-2)$-planes has codimension at least two in the Grassmannian $\G(r-2, \P^r)$. 

Following the same stream of ideas, it would be interesting to study the locus of non-uniform subspaces $\Wn(X)$ in the appropriate Grassmannian for a higher dimensional subvariety $X$. We focus on the case of a projection of a smooth surface $X$ in $\P^3$, where the locus $\Wn(X)$ is a subset of $\P^3$. Our main result is the following:

\begin{teor} \label{thm:main}
For every smooth irreducible surface $X$ in $\P^3$, the locus $\Wn(X)$ of non-uniform points is finite.
\end{teor}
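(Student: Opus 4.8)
The plan is to fix a smooth irreducible $X\subset\P^3$ of degree $d$ and to show that for all but finitely many centres $L\in\P^3$ the monodromy group $M(\pi_L)\le S_d$ of the projection $\pi_L\colon X\to\P^2$ equals the full symmetric group. I first record the group-theoretic reduction: since $X$ is irreducible, $M(\pi_L)$ is transitive for every $L\notin X$; and by Jordan's theorem a primitive subgroup of $S_d$ containing a transposition is all of $S_d$. Hence it suffices to prove, for cofinitely many $L$, that (A) $M(\pi_L)$ contains a transposition and (B) $M(\pi_L)$ is primitive. Equivalently, once (A) holds the only obstruction to uniformity is imprimitivity, i.e.\ the existence of a nontrivial block decomposition of the general fibre.

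For (A) I would analyse the ramification of $\pi_L$. A line through $L$ is tangent to $X$ at $x$ precisely when $L\in T_xX$, so the ramification curve is $R_L=X\cap P_L$, where $P_L$ is the (degree $d-1$) polar surface of $L$. For general $L$ the curve $R_L$ is smooth and $\pi_L|_{R_L}$ maps it birationally onto a branch curve along which $\pi_L$ has an ordinary fold; the local monodromy around a general point of the branch curve is then a transposition. The centres for which no simple tangent line exists are exactly those for which this ramification is everywhere non-reduced, a closed condition I expect to cut out at most a finite set; this part is routine and I would dispatch it first.

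The heart of the argument, and the main obstacle, is (B): ruling out imprimitivity on a positive-dimensional locus. Suppose for contradiction that there is an irreducible $B\subseteq\Wn(X)$ with $\dim B\ge 1$ over which $M(\pi_L)$ is imprimitive. For each $L\in B$ a block system of size $k$ (with $1<k<d$) is equivalent to a factorisation $X\xrightarrow{\phi_L}Y_L\xrightarrow{\psi_L}\P^2$ of $\pi_L$ through an intermediate irreducible surface $Y_L$, where $\deg\phi_L=k$. The crucial geometric feature is that every fibre of $\phi_L$ consists of $k$ points of $X$ lying on a single line through $L$: the blocks are sets of collinear points whose line passes through the centre. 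Thus imprimitivity produces a nontrivial block correspondence $\Gamma_L\subsetneq X\times_{\P^2}X$ all of whose pairs are collinear with $L$, and letting $L$ range over $B$ these assemble into a correspondence $\Gamma\subset X\times X$ dominating $B$.

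This correspondence is what I would feed into the focal machinery. The block correspondence yields a filling family of curves on $X$ (the loci swept out by a moving block as the projection line rotates), and the defining collinearity constrains the family's derivative: the focal points of each member must lie on lines through the corresponding centre $L\in B$. A first-order focal-scheme computation then bounds the number and location of these focal points, and the requirement that the constraint hold simultaneously for a $\ge 1$-dimensional family of centres $B$ forces an incompatibility --- concretely, the focal conditions cannot be met for infinitely many collinear-compatible centres unless $X$ degenerates (is a plane, or is ruled so that the members become lines), which is excluded for a smooth surface of degree $\ge 2$. The remaining low-degree or otherwise special surfaces form a short explicit list that I would check by hand. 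Carrying out the focal computation and converting it into the sharp numerical bound that yields $\dim B=0$ is the step I expect to be delicate; everything else is reduction and bookkeeping.
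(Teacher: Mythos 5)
Your group-theoretic reduction is sound (transitive $+$ transposition $+$ primitive $\Rightarrow S_d$), but the proposal misjudges where the difficulty of this theorem lies, and the two places where you wave your hands are exactly the places where a real argument is required. The fatal gap is in your step (A). The set of centres $L$ through which no simple tangent line passes is precisely the locus $\cP_X=\{L \st \cT_X(L)=\cG_X(L)\}$ of this paper, and its finiteness is not ``routine'': it is the entire technical content of the theorem. Observing that it is a closed algebraic condition gives nothing, since closed subsets of $\P^3$ can perfectly well be curves or surfaces. The paper's proof that $\dim\cP_X=0$ (Proposition \ref{propdimzero}) requires showing that the bitangent and asymptotic tangent lines form a filling family $\cG_X$ (Lemma \ref{lemma:fillingfamily}, itself a nontrivial computation with the degree and genus of the branch curve), that the subfamily of lines of $\cG_X$ meeting $\cP_X$ would again be filling if $\dim\cP_X\geq 1$ (Proposition \ref{prop:overlinefilling}), and then a count of foci on a general member: a bitangent or flex line already carries a focal scheme of length $2$ at its contact points (Proposition \ref{prop:CFtangentfoci}), and a fundamental point on $\cP_X$ would give a third focus, contradicting the degree bound of Proposition \ref{prop:CFdegree}. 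None of this is in your proposal; the sentence ``this part is routine and I would dispatch it first'' is where the proof is missing.

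Second, the part you call the heart --- ruling out imprimitivity via a block correspondence and a focal computation on a family of curves --- is both unproven and unnecessary. It is unproven because the decisive claim (``the focal conditions cannot be met \dots unless $X$ degenerates'') is asserted rather than derived: no characteristic map is written down and no degree count is performed for your family of block loci. It is unnecessary because one never needs Jordan's theorem or primitivity here. If $L\notin\cP_X$, then $\cG_X(L)$ is finite, so a general plane $\eta$ through $L$ avoids every non-simple tangent line through $L$; the curve $C=X\cap\eta$ is smooth and irreducible by Bertini, every tangent line to $C$ through $L$ is a simple tangent, hence every local monodromy of $\pi_L|_{C}$ is a transposition. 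Since loops around the branch points generate the fundamental group of the punctured target line, $M(\pi_L|_{C})$ is transitive and generated by transpositions, hence equals $S_d$ (Lemma \ref{lemma:IndTrans}), and this propagates to $M(\pi_L)=S_d$ by the compatibility of monodromy with hyperplane sections (Lemma \ref{lemma:lemmainters}). So once (A) is established in the strong form ``the general tangent line through $L$ is simple,'' uniformity follows with no imprimitivity analysis at all: your proposal spends its effort on a step that can be bypassed and dismisses the step that cannot.
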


There are many similarities between the case of surfaces in $\P^3$ and the case of plane curves. For the latter case, the aforementioned results of Pirola and Schlesinger imply that the dimension of the locus of non-uniform points is at most zero. In fact, our present work for surfaces in $\P^3$ is a first step toward the generalization of the above statement to higher dimensional hypersurfaces. We expect the dimension of the non-uniform locus to be at most zero for any smooth hypersurface in $\P^r$.

Despite the similarities described above, our proof of Theorem \ref{thm:main} is built upon techniques which are quite different from the ones used in \cite{PS}. The main tools we use in our work come from classical differential projective geometry: the study the family of multi-tangent lines to the surface $X$ and the so-called \textit{focal loci}, as developed in \cite{CF}. A standard way in classical algebraic geometry to prove that a set $V$ is finite consists of showing first that $V$ is contained in a certain algebraic set, and then that such a set is of dimension zero. In this framework, our strategy can be summarised in the following points: 

\begin{enumerate}
\item Describe a suitable filling family of lines related to the monodromy group of the surface $X$.
\item Prove that the dimension of the focal locus of this family is actually zero.
\item Prove that the locus of non-uniform points is contained in the focal locus of such a family.
\end{enumerate}

All of the above points are carried out in Sections \ref{sec:prel} and \ref{sec:nupos}.

Another viewpoint for the generalization of the uniform position principle is carried out by Cukierman in \cite{C}, in which he shows that the locus of non-uniform points for a general planar curve is in fact empty. A similar result is expected for a general hypersurface in $\P^r$, but this problem is still open. However, we prove that this is true for the case case of cubic surfaces thanks to the classification of automorphisms carried out in \cite{H}. This is done in Proposition \ref{prop:genCucCub}.
\vspace{3 mm}


\section{Preliminaries} \label{sec:prel}
\subsection{Coverings and monodromy group}
Let $X$ and $Y$ be two complex irreducible algebraic varieties of the same dimension together with a generically finite dominant map $f:X \to Y$ of degree given by the corresponding field extension $d=[K(X):K(Y)]$. Consider a point $y$ in an open dense subset $U \subset Y$ such that $f|_U$ is an \'etale covering. Then for any element $\gamma$ in the fundamental group $\pi_1(U,y)$ there exists a unique lift, coming from $f|_U$, such that if $x\in f^{-1}(y)$ is a point in the fibre of $y$ then the lifting $\tilde{\gamma}: [0,1] \to X$ satisfies $\tilde{\gamma}(0)=x$. Under these conditions we have the following well defined map
\begin{equation} \label{eqn:rho}
\begin{aligned}
\rho : \pi_1(U,y)\ &\to \aut(f^{-1}(y))\\
\gamma &\mapsto (x \mapsto \tilde{\gamma}(1)).
\end{aligned}
\end{equation}

\begin{defn}
The \textbf{monodromy group} of $f$ is the image under $\rho$ of the fundamental group $\pi_1(U,x)$, and is denoted by $M(f)$.
\end{defn}

Via the identification of $\aut(f^{-1}(y))$ with the symmetric group $S_d$ in $d$ elements, the group $M(f)$ can be viewed as a subgroup of $S_d$. It is a well-established fact that the monodromy group may be identified with the Galois group of the corresponding field extension, as proved in Section I of \cite{H_enum}. This allows to translate problems about monodromy into the study of the corresponding Galois groups: for instance, one can check from this
correspondence that the monodromy group is independent from the base point $x$ and does not depend on the open $U$. 

In order to study the locus of non-uniform points, we will consider the following sufficient condition for a group to be the whole symmetric group. This is an application of Jordan's theorem, see \cite{FPG}, Theorem 13.9. We state the result in the context of monodromy groups, and give a direct proof.

\begin{lemma} \label{lemma:IndTrans}
Let $M(f)$ be the monodromy group of a certain morphism $f$ of degree $d$. Assume $M(f)$ is generated by transpositions, then $M(f)$ is isomorphic to the symmetric group $S_d$.
\end{lemma}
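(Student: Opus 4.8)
The plan is to combine two facts: the monodromy group $M(f)$ is a \emph{transitive} subgroup of $S_d$, and a transitive subgroup generated by transpositions must be the whole of $S_d$. Transitivity is the feature that the hypothesis ``generated by transpositions'' alone does not supply---a subgroup generated by transpositions is in general only a direct product of symmetric groups on its orbits---so I would first record why it holds here. Since $f$ arises from an irreducible variety $X$, the \'etale cover $f|_U$ over the open set $U$ has irreducible, hence connected, total space $f^{-1}(U)$; for a connected covering the action of $\pi_1(U,y)$ on the fibre $f^{-1}(y)$ is transitive, so $M(f)$ is transitive on the $d$ points of the fibre.

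For the group-theoretic core, I would identify the fibre with $\Omega=\{1,\dots,d\}$ and let $T$ be a generating set of transpositions for $G:=M(f)$. I would build a graph $\Gamma$ on the vertex set $\Omega$ with an edge $\{i,j\}$ for every $(i\,j)\in T$, and prove by induction on the length of a path in $\Gamma$ that whenever $i$ and $j$ lie in the same connected component of $\Gamma$, the transposition $(i\,j)$ belongs to $G$. The inductive step is the conjugation identity $(i\,k)=(i\,j)(j\,k)(i\,j)$, which shows that edges can be composed along a path to produce every transposition internal to a component.

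Next I would identify the connected components of $\Gamma$ with the orbits of $G$ on $\Omega$. One inclusion is exactly the statement just proved; for the other, each generator $(i\,j)$ moves only the two endpoints of an edge, so no element of $G$ can send a vertex outside its component. Transitivity of $G$ then forces $\Gamma$ to be connected, whence $(i\,j)\in G$ for \emph{all} $i\neq j$. Since all transpositions together generate $S_d$, we conclude $G=S_d$.

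The only genuine obstacle is the transitivity input: the conclusion is false without it, so the argument must make explicit that the phrase ``monodromy group of a morphism'' carries with it the connectedness coming from irreducibility of the source. Once that point is secured, the remainder is the standard elementary graph-and-conjugation argument and presents no real difficulty.
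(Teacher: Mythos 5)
Your proof is correct and takes essentially the same route as the paper's: both arguments rest on transitivity of $M(f)$ together with the conjugation identity $(i\,j)(j\,k)(i\,j)=(i\,k)$ to propagate transpositions until every transposition lies in the group. Your graph-theoretic packaging, and your explicit justification of transitivity via irreducibility of the source (which the paper simply asserts with ``we know''), are refinements of presentation rather than a genuinely different argument.
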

\begin{proof}
We know that $M(f)$ is a transitive subgroup of $S_d$. Assuming that it is generated by transpositions, let us prove that it is all $S_d$. 

Let us consider the set $K$ of the transpositions in $M(f)$ containing the element $1$. By hypothesis we know that this set is not empty, so we can assume $(1,2) \in K$.
The aim is now to prove that $K$ contains $\{(1,2), (1,3), \ldots, (1,d)\}$.
Assume by contradiction that this is not true, so up to changing the name of the elements we have $K=\{(1,i), 2 \leq r < d\}.$
By transitivity, and by the fact that $M(f)$ is generated by transpositions, there is an $(a,b)$ with $a \leq r$ and $b>r$. Then $M(f)$ contains also the transposition $(1,a)(a,b)(1,a)=(1,b)$, which was not in the set. This is a contradiction.
\end{proof}

In this paper, we work with projections from a point $L$ in $\P^3$. This is a rational map $\pi_{L}:\P^3 \dashrightarrow \P^2_L$, where $\P^2_L$ is the projective space of lines on $\P^3$ containing $L$. Taking a point $x \in \P^3 \smallsetminus L$ one can consider the line $\langle x, L \rangle$ spanned by $x$ and $L$. 

\begin{lemma} \label{lemma:projfinite}
Let $X \subset \P^3$ be a smooth, irreducible surface of degree $d$ and consider a point $L \in \P^3 \smallsetminus X$. The restriction of $\pi_{L}$ to $X$ is a dominant surjective morphism of finite degree $d$.
\end{lemma}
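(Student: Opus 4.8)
The plan is to verify the three assertions---that $\pi_{L}|_X$ is a morphism, that it is surjective, and that its degree equals $d$---in turn, with the transversality of a general fibre being the crux.

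First I would observe that $\pi_{L}|_X$ is genuinely a morphism, and not merely a rational map. The projection $\pi_{L}$ is defined on all of $\P^3 \smallsetminus L$, since for $x \neq L$ the line $\langle x, L\rangle$ is a well-defined point of $\P^2_L$; choosing coordinates with $L = [0:0:0:1]$, this map is simply $[x_0:x_1:x_2:x_3] \mapsto [x_0:x_1:x_2]$, whose only indeterminacy point is $L$ itself. As $L \notin X$ by hypothesis, the centre of projection misses $X$ entirely, so $\pi_{L}|_X : X \to \P^2_L$ is a morphism of projective varieties.

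Next, for surjectivity I would invoke B\'ezout's theorem. The fibre of $\pi_{L}|_X$ over a point $\ell \in \P^2_L$ (a line through $L$) is exactly the intersection $\ell \cap X$. Because $L \notin X$, no line through $L$ can be contained in the irreducible surface $X$; hence $\ell \cap X$ is always a finite scheme, and by B\'ezout it has length exactly $d$ over $\bC$. In particular every such intersection is nonempty, so $\pi_{L}|_X$ is surjective, and a fortiori dominant. Since every fibre is finite, $\pi_{L}|_X$ is moreover a proper morphism with finite fibres, hence finite.

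Finally, to identify the degree I would show that a general fibre consists of $d$ distinct reduced points, since the degree of a dominant generically finite morphism is the cardinality of a general fibre. It therefore suffices to prove that a general line through $L$ meets $X$ transversally. Here the smoothness of $X$ is essential: the locus where $\pi_{L}|_X$ ramifies is $R = \{x \in X : L \in T_x X\}$, the intersection of $X$ with the polar surface of $L$ (of degree $d-1$), which is a proper closed subvariety of dimension at most $1$. Its image $\pi_{L}(R) \subset \P^2_L$ is then a proper closed subset, and any line $\ell$ outside it meets $X$ in $d$ distinct points, giving $\deg(\pi_{L}|_X) = d$. The main obstacle is precisely this last step: establishing that the ramification locus is a proper closed subset, so that the general line is transverse. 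This is where the hypothesis that $X$ is smooth cannot be dropped---without it the tangent-line condition need not cut out a proper subvariety, and the clean count of $d$ distinct points in the general fibre could fail.
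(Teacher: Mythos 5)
Your proof is correct, and its skeleton is the same as the paper's: the map is a morphism because $L \notin X$, each fibre is the scheme $\ell \cap X$ for a line $\ell$ through $L$, and B\'ezout gives $d$ points counted with multiplicity. The two arguments part ways only at the crux, the reducedness of the general fibre: the paper disposes of this in one sentence by citing Bertini's theorem, whereas you make it explicit by identifying the ramification locus $R = \{x \in X : L \in T_x X\}$ with $X \cap P_L$, where $P_L$ is the polar surface of $L$ of degree $d-1$; since $X$ is irreducible of degree $d > d-1$, this is a proper closed subset of dimension at most one, so its image cannot fill $\P^2_L$ and every line outside that image is transverse to $X$. Your route is more self-contained, pins down the degree of the ramification curve, and in fact anticipates the explicit description $R = \{f = \partial f/\partial z = 0\}$ that the paper itself relies on later, in the proof of Lemma \ref{lemma:fillingfamily}; the paper's route is shorter at the cost of a black-box citation. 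One caveat: your closing claim that smoothness of $X$ ``cannot be dropped'' for the transversality count is overstated. For an irreducible but singular surface in characteristic zero the conclusion still holds: the singular locus has dimension at most one, so a general line through $L$ avoids it, and generic smoothness (or the polar argument applied on the smooth locus) gives $d$ distinct points in the general fibre. Smoothness is what makes the global polar description of $R$ clean, not what makes the degree count work.
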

\begin{proof}
Since $L \notin X$, the projection is well defined. Consider a point $t$ in $\P^{2}_L$, corresponding to a line $\lambda \subset \P^3$ passing through $L$. The fibre over $t$ is the intersection $\lambda \cap X$ in $\P^3$, which is given by $d$ points counted with multiplicity. Moreover the intersection of a general $\lambda$ with $X$ is reduced, i.e. is given by exactly $d$ points by Bertini's Theorem.
\end{proof}

By the previous lemma, given a point $L \in \P^3 \smallsetminus X$, it makes sense to study the monodromy group of the projection $\pi_L|_X$. For the sake of simplicity, we will not distinguish between $\pi_L$ and its restriction $\pi_{L}|_X$, and both will
be denoted simply by $\pi_L$. We are interested in studying the following spaces:
\begin{align*}
\Wu(X)&:=\{L \in \P^3 \smallsetminus X\ |\ M(\pi_L)=S_d\}\\
\Wn(X)&:=\{L \in \P^3 \smallsetminus X\ |\ M(\pi_L) \neq S_d\}.
\end{align*}
As pointed out in the introduction, we will call the elements of $\Wu(X)$ \textbf{uniform}, and the elements of $\Wn(X)$ \textbf{not uniform}.

\begin{lemma} \label{lemma:lemmainters}
Let $X$ be a surface in $\P^3$, $L \in \Wu(X)$ and $\eta$ be a hyperplane in $\P^3$ containing $L$ such that $X \cap \eta$ is reduced. Then the projection $\pi_L$ can be restricted to $\eta$ and $\Wu(X \cap \eta) \subset \Wu(X) \cap \eta$, where $\Wu(X \cap \eta)$ is the set of points $L \in \eta \smallsetminus X \cap \eta$ for which the projection $\pi_L|_{X \cap \eta}$ is not uniform. In other words $L$ is a uniform point for $X \cap \eta$ implies $L$ is a uniform point for $X$.
\end{lemma}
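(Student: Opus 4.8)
The plan is to realize the projection of the plane curve $C := X \cap \eta$ as the pullback of the projection of $X$ over a line in the target $\P^2_L$, and then to compare the two monodromy groups through the induced map on fundamental groups. First I would record that, since $L \in \P^3 \smallsetminus X$ and $L \in \eta$, the point $L$ does not lie on $C = X \cap \eta$; as $C$ is reduced of degree $d$ by hypothesis, arguing as in Lemma \ref{lemma:projfinite} (now inside the plane $\eta$) shows that $\pi_L|_C : C \dashrightarrow \P^1_L$ is a dominant morphism of degree $d$, where $\P^1_L \subset \P^2_L$ denotes the sub-pencil of those lines through $L$ that are contained in $\eta$. In particular $M(\pi_L|_C)$ is well defined, which already gives the first assertion that $\pi_L$ restricts to $\eta$.

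The key geometric observation is that for every line $\lambda$ through $L$ contained in $\eta$ one has $\lambda \cap X = \lambda \cap C$, because $\lambda \subset \eta$ forces every point of $\lambda \cap X$ to lie in $X \cap \eta = C$. Consequently the \'etale covering $\pi_L|_C$ over $\P^1_L$ is exactly the pullback of the \'etale covering $\pi_L|_X$ over $\P^2_L$ along the inclusion $\P^1_L \hookrightarrow \P^2_L$. Said differently, if $B_X \subset \P^2_L$ and $B_C \subset \P^1_L$ denote the respective branch loci, then $B_C = B_X \cap \P^1_L$, and the fibre of $\pi_L|_C$ over a point of $\P^1_L \smallsetminus B_C$ coincides with the fibre of $\pi_L|_X$ over the same point.

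I would then fix a base point $t_0 \in \P^1_L \smallsetminus B_C \subseteq \P^2_L \smallsetminus B_X$ and use that the two fibres over $t_0$ literally agree, so that both automorphism groups are identified with the same $S_d$. The inclusion $\iota : \P^1_L \smallsetminus B_C \hookrightarrow \P^2_L \smallsetminus B_X$ induces $\iota_* : \pi_1(\P^1_L \smallsetminus B_C, t_0) \to \pi_1(\P^2_L \smallsetminus B_X, t_0)$, and because the $C$-covering is the pullback of the $X$-covering, lifting a loop $\gamma$ by $\pi_L|_C$ is the same as lifting $\iota_* \gamma$ by $\pi_L|_X$. Hence $\rho_C(\gamma) = \rho_X(\iota_* \gamma)$ under the identification of fibres, giving
\[
M(\pi_L|_C) = \rho_X\big(\iota_* \pi_1(\P^1_L \smallsetminus B_C, t_0)\big) \subseteq \rho_X\big(\pi_1(\P^2_L \smallsetminus B_X, t_0)\big) = M(\pi_L|_X).
\]
If $L$ is uniform for $C$, i.e. $M(\pi_L|_C) = S_d$, this inclusion forces $M(\pi_L|_X) = S_d$, so $L$ is uniform for $X$, as claimed.

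The main obstacle is the careful bookkeeping behind the comparison of the two monodromy representations: one must check that the fibres can be identified compatibly over a common base point and that the branch locus of the restricted covering is precisely $B_X \cap \P^1_L$. This last point uses the reducedness of $C$ together with $L \notin C$ to guarantee that $\P^1_L$ is not contained in $B_X$ and that the generic fibre consists of $d$ reduced points, so that both coverings are genuinely degree-$d$ and \'etale over the chosen base. Once the pullback description of the coverings is established, the subgroup inclusion is formal.
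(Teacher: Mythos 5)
Your proof is correct and follows essentially the same route as the paper: both compare the two monodromy representations via the map on fundamental groups induced by the inclusion of the pencil of lines in $\eta$ into the target $\P^2_L$, using that the fibres of the two coverings over a common base point coincide, so that $M(\pi_L|_{X\cap\eta}) \subseteq M(\pi_L)$ inside the same $S_d$. Your write-up simply makes explicit the bookkeeping (the identity $\lambda \cap X = \lambda \cap C$ for $\lambda \subset \eta$, the compatibility of branch loci, and the role of reducedness) that the paper compresses into a single commutative diagram.
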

\begin{proof}
There exists an open set $A$ of the $\P^{2}$ target of $\pi_L$ and a point $x\in A \cap \eta$ such that the construction of the monodromy group can be described by the following commutative diagram
\begin{equation*}
	\xymatrix{
\pi_1(A,x)\ar[r]^m & M(\pi_L)\ar[r] & S_d\ar@{=}[d]\\
\pi_1(A \cap \eta, x)\ar[r]^{m|_\eta}\ar[u]_{i_*} & M(\pi{_L}|_\eta)\ar[r]\ar[u]_{i_*} & S_d\\
}
\end{equation*}
where $M(\pi_L)$ and $M(\pi{_L}|_\eta)$ are just the images of the monodromy maps. The map $i: A \cap \eta \rightarrow A$ is the inclusion and we are denoting the first two vertical maps by $i_*$. If $M(\pi{_L}|_\eta)$ is isomorphic to $S_d$, then the commutativity of the diagram implies also $M(\pi_L)$ to be isomorphic to $S_d$.
\end{proof}

\subsection{Lines having a specified contact with varieties}
We will recall some notion stated in Section $3$ of \cite{CF}. Let $X$ be a smooth hypersurface in $\P^r$ of degree $d$, and consider a line
$\lambda$ not contained in $X$. We will denote by $\lambda_X$ the $0$-dimensional scheme given by $\lambda \cap X$. Considering the multiplicity of the intersection at every point, we write $\lambda_X=\sum n_i x_i$, where $n_i:=m_{x_i}(\lambda,X)$ denotes the intersection multiplicity of the line $\lambda$ at $x_i \in X$. We call the sequence $(n_1, \ldots, n_s)$ the \textbf{intersection type} of $\lambda$ with $X$. Notice that $\sum n_i =d$.
We will use the following notations:
\begin{itemize}
\item $\lambda$ is called \textbf{simple secant} if all the $n_i$'s are equal to $1$.
\item $\lambda$ is called \textbf{tangent} if there is an $n_i \geq 2$; the space of tangent lines to $X$ is denoted by $\cT_X$. If all the $n_i$'s are equal to $1$ except one that is equal to $2$, $\lambda$ is called \textbf{simple tangent}.
\item $\lambda$ is called \textbf{asymptotic tangent} if there is an $n_i \geq 3$; the space of asymptotic tangent lines to $X$ is denoted by $\cF_X$.
\item $\lambda$ is called \textbf{bitangent} if there are $x_i \neq x_j$ such that $n_i, n_j \geq 2$; the space of bitangent lines to $X$ is denoted by $\cB_X$.
\end{itemize}

The branching weight of a line $\lambda$ is defned as $b(\lambda):=\sum (n_i-1)$. Using this, we have that $\lambda$ belongs to $\cT_X$ precisely when $b(\lambda) \geq 1$. Notice that these notions can be generalized to subvarieties of arbitrary dimension (see, for example, \cite{CSJ}). 



We will need a result about the finiteness of the so-called planar points.

\begin{defn}
Let $X$ be defined as above. For a point $x\in X$, consider the curve $C_x:=T_x(X)\cap X$, where $T_x(X)$ is the plane tangent to $X$ in $x$. The point $x$ is called a \textbf{planar point} of $X$ if it is a point of multiplicity $\geq 3$ in $C_x$.
\end{defn}

In particular, this means that every line tangent to $X$ at $x$ will intersect this point with multiplicity at least $3$. For the case of $\P^3$ we have:

\begin{lemma}[\cite{CF}, Lemma 3.6]\label{lemma:planarpts}
Let $X$ be defined as above. There are only a finite number of planes in $\P^3$ cutting $X$ in a curve containing a planar point.
\end{lemma}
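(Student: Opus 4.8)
The plan is to reduce the statement to the finiteness of the locus $\Sigma \subseteq X$ of planar points, and then to prove that $\Sigma$ is finite by exploiting the finiteness of the Gauss map of a smooth surface. Throughout I assume $\deg X = d \geq 2$, the case $d=1$ being trivial.

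First I would observe that the planes relevant to the statement are exactly the tangent planes at planar points. Fix a plane $\eta$ and a point $p \in \eta \cap X$. If $\eta \neq T_p(X)$, then $\eta$ meets $X$ transversally at $p$, so the curve $\eta \cap X$ is smooth at $p$ and $p$ has multiplicity $1$. If instead $\eta = T_p(X)$, choose affine coordinates with $p$ at the origin and $T_p(X) = \{z = 0\}$, and write $X$ locally as a graph $z = q(x,y) + (\text{higher order})$ with no linear part. Then $\eta \cap X$ is cut out in the plane $\{z=0\}$ by $q + (\text{higher order}) = 0$, and its multiplicity at $p$ equals the order of vanishing of this series: it is $2$ when the quadratic part $q$ (the second fundamental form $\operatorname{II}_p$) is nonzero, and $\geq 3$ exactly when $q \equiv 0$, i.e. when $p$ is planar. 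Hence a plane cuts $X$ in a curve with a point of multiplicity $\geq 3$ if and only if it is the tangent plane $T_p(X)$ at a planar point $p$. Once $\Sigma$ is known to be finite, the set of such planes is contained in the finite set $\{T_p(X) : p \in \Sigma\}$, and the lemma follows.

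It remains to prove $\Sigma$ is finite. The computation above identifies $\Sigma$ with the locus where $\operatorname{II}$ vanishes identically. Since, in the graph description, $\operatorname{II}$ is the Hessian $\left(\frac{\partial^2 f}{\partial x_i \partial x_j}\right)$ of the local defining function, and this Hessian is precisely the matrix of the differential $d\gamma$ of the Gauss map $\gamma : X \to (\P^3)^{\ast}$, $x \mapsto T_x(X)$, we obtain $\Sigma = \{x \in X : d\gamma_x = 0\}$. This is a closed subset of $X$, so if it were infinite it would contain an irreducible curve $\Gamma$. Along $\Gamma$ the differential $d\gamma$ vanishes at every point, hence (passing to the normalization of $\Gamma$ if needed) the restriction $\gamma|_\Gamma$ has zero derivative everywhere and is therefore constant; that is, $\gamma$ contracts $\Gamma$ to a single point of $(\P^3)^{\ast}$.

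The main obstacle is to rule this out, and here I would invoke the classical fact that for a smooth surface of degree $d \geq 2$ in $\P^3$ the Gauss map $\gamma$ is a finite morphism in characteristic zero. A finite morphism has finite fibres and cannot contract a curve, so $\gamma(\Gamma)$ cannot be a point, a contradiction; thus $\Sigma$ is finite. I expect the only genuinely delicate points to be the justification of the finiteness of the Gauss map and the identification of $\operatorname{II}$ with $d\gamma$: the former follows from Zak's theorem on tangencies, and the latter is the standard relation between the second fundamental form and the derivative of the Gauss map. Note that I do not need full finiteness of $\gamma$ but only that it contracts no curve, i.e. that its fibres are $0$-dimensional, which is the content actually used.
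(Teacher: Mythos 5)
Your proof is correct, but there is nothing in the paper to compare it against: the paper does not prove this lemma at all, it imports it verbatim from \cite{CF}, Lemma 3.6, and uses it as a black box (indeed, what the paper actually invokes later is the finiteness of the set of planar points on $X$, which is exactly the set $\Sigma$ you bound). Your argument is a sound, self-contained reconstruction along the classical lines of the cited literature: planar points are precisely the zeros of the second fundamental form, hence precisely the points where $d\gamma$ vanishes for the Gauss map $\gamma\colon X \to (\P^3)^{*}$, and a positive-dimensional component of this locus would be contracted by $\gamma$, contradicting the finiteness of the Gauss map of a smooth non-linear surface. Your opening reduction is also the right way to disambiguate the statement: the relevant planes are those whose section of $X$ acquires a point of multiplicity $\geq 3$, which forces the plane to be $T_p(X)$ at a planar point $p$; under the literal alternative reading (planes whose section merely passes through some planar point of $X$) the lemma would be false whenever planar points exist, as they do for instance on the Fermat quartic. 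Two technical remarks. The identification of $\operatorname{II}_p$ with $d\gamma_p$ that you flag as delicate is standard and can be found in \cite{GH_LocalDifferential}; strictly speaking the local matrix of $d\gamma$ has one more (dependent) row than the Hessian, but the equivalence of their vanishing, which is all you use, is correct. And Zak's theorem, while a legitimate reference, is heavier than needed: since you only require that $\gamma$ contracts no curve, you can argue by hand that a smooth $X=\{F=0\}\subset\P^3$ of degree $d\geq 2$ has no plane tangent along a curve. If $\eta=\{x_0=0\}$ were tangent to $X$ along a curve, every point of that curve would be singular on the plane curve $\eta\cap X$, so $F|_{x_0=0}$ would have a repeated factor $G$ of positive degree; then the three hypersurfaces $\{x_0=0\}$, $\{G=0\}$, $\{\partial F/\partial x_0=0\}$ meet somewhere in $\P^3$, and any point of this intersection is a singular point of $X$, a contradiction. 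This variant keeps the entire proof at the level of elementary projective geometry.
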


Eventually, we recall the following proposition which compares the branch locus of $\pi_L$ with the intersection type of the lines tangent to $X$.

\begin{prop}[\cite{CF}, Proposition 3.8] \label{prop:cfbranch}
Let $X$ be a surface in $\P^3$ and consider the projection $\pi_L$ from a point $L \notin X$ to a general plane in $\P^3$. Consider a point $y$ in the branch locus $B$ of $\pi_L$. Then, the multiplicity of $B$ at $y$ is the branching weight of the line $\langle L, y \rangle$.
\end{prop}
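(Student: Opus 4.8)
The plan is to realise the branch curve $B$ as the image of the ramification (contour) curve and then localise the multiplicity computation at the points of the fibre over $y$. Write $\lambda_X = \sum_i n_i x_i$ for $\lambda = \langle L, y\rangle$, and let $R\subset X$ be the critical locus of $\pi_L$, so that $B=\overline{\pi_L(R)}$. A point $x_i$ contributes to $R$ exactly when $n_i\geq 2$, and for each such $i$ the germ of $R$ at $x_i$ maps to a germ of $B$ at $y$; germs attached to distinct $x_i$ give distinct local analytic branches of $B$ through $y$. Since the multiplicity of a plane curve at a point is the sum of the multiplicities of its local branches, we get
\begin{equation*}
\operatorname{mult}_y B \;=\; \sum_{i:\,n_i\geq 2}\operatorname{mult}_y\bigl(\pi_L(R)\text{ at }x_i\bigr),
\end{equation*}
so it suffices to prove that each tangent point $x_i$ of contact $n_i=n$ contributes exactly $n-1$, after which $\sum_i(n_i-1)=b(\lambda)$ gives the statement.

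For the local contribution I would choose coordinates adapted to the general plane so that $x_i$ is the origin, $X=\{w=g(u,v)\}$ with tangent plane $\{w=0\}$, and the projection runs along the direction of $\lambda$, namely the $u$-axis, with target coordinates $(v,w)$ and $\pi_L(u,v)=(v,g(u,v))$. The contact condition reads $g(u,0)=a\,u^{\,n}+O(u^{\,n+1})$ with $a\neq 0$, the critical locus is $R=\{g_u(u,v)=0\}$, and locally $B=\pi_L(R)$. Expanding $g_u(u,v)=g_{uv}(0,0)\,v+n a\,u^{\,n-1}+(\text{higher order})$, one sees that when $g_{uv}(0,0)\neq 0$ the curve $R$ is smooth at the origin and admits the parametrisation $u\mapsto(u,v(u))$ with $v(u)=-\tfrac{na}{g_{uv}(0,0)}u^{\,n-1}+O(u^{\,n})$. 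Substituting, and using $g_v(u,0)=g_{uv}(0,0)\,u+O(u^2)$, yields the parametrisation of the branch of $B$ at $y$
\begin{equation*}
u\;\longmapsto\;\bigl(v(u),\,g(u,v(u))\bigr)=\bigl(c\,u^{\,n-1}+\cdots,\; -(n-1)a\,u^{\,n}+\cdots\bigr),
\end{equation*}
whose two coordinate functions vanish to orders $n-1$ and $n$. Hence this branch has multiplicity $\min(n-1,n)=n-1$ at $y$, as required; for $n=2$ it is a smooth branch, for $n=3$ an ordinary cusp, and so on.

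The main obstacle is the degenerate case $g_{uv}(0,0)=0$, which for $n\geq 3$ (where already $g_{uu}(0,0)=0$) forces the first row of the Hessian of $g$ to vanish, i.e.\ $x_i$ is a parabolic (or planar) point whose asymptotic direction coincides with that of $\lambda$. Then $R$ acquires higher multiplicity at the origin and the clean parametrisation above breaks down: $R$ typically splits into several smooth branches, each of which maps to a multiplicity-one branch of $B$, and one must check that their contributions still sum to $n-1$. I would control these configurations using the general-position hypothesis on the plane together with Lemma \ref{lemma:planarpts}, which confines planar points to finitely many plane sections, so that the asymptotic tangencies occurring along $B$ are non-degenerate outside a controlled locus. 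Making this genericity step precise — in particular ruling out, or treating separately, lines through $L$ that are asymptotic tangent exactly at a parabolic point, and verifying that the resulting several branches recombine to the expected weight $n-1$ — is the part that requires the most care.
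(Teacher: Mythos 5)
Note first that the paper gives no proof of this proposition: it is quoted verbatim from \cite{CF}, Proposition 3.8, so the benchmark is what a complete proof requires.

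Your local computation in the non-degenerate case $g_{uv}(0,0)\neq 0$ is correct (and the fact that $\gcd(n-1,n)=1$ makes the parametrisation primitive is exactly why the branch multiplicity is $n-1$), but the two issues you defer at the end are genuine gaps, and the repair you propose cannot close them. The ``general plane'' in the statement is only the target of the projection: replacing it by another plane composes $\pi_L$ with a linear automorphism of $\P^2$, so it cannot change which tangencies occur or the local structure of $B$; those depend only on $X$ and $L$, and the proposition quantifies over \emph{every} $L\notin X$. Since for $\deg X\geq 3$ the parabolic points of $X$ sweep out a curve (the intersection with the Hessian surface), there always exist points $L$ lying on an asymptotic tangent line at a parabolic point, which is precisely your degenerate case $g_{uv}(0,0)=0$, $n\geq 3$; no genericity can remove it. Moreover, Lemma~\ref{lemma:planarpts} concerns \emph{planar} points (vanishing second fundamental form), not parabolic ones, so it does not confine this case to a controlled locus.

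More seriously, in the degenerate case the statement is actually \emph{false} under the reading your argument forces on it, namely multiplicity of the reduced image curve computed as a sum over distinct branches coming from distinct $x_i$. The proposition is a statement about the branch \emph{divisor} $B=(\pi_L)_*R$, the push-forward of the ramification divisor with its multiplicities, and the paper's own example in Section~\ref{sec:cubicsurfaces} shows the difference: for the Fermat cubic and $L=(0:0:0:1)$ the ramification scheme is $X\cap\{x_3^2=0\}$, everywhere non-reduced; every line $\langle L,y\rangle$ with $y\in B$ meets $X$ in a single point of multiplicity $3$, so $b(\langle L,y\rangle)=2$, yet the reduced branch curve is the smooth plane Fermat cubic, of multiplicity $1$ at each of its points. (The same mechanism, a one-dimensional family of bitangent or flex lines through $L$, also breaks your unproved claim that distinct $x_i$ give distinct branches of $B$.) Once $B$ is taken with its divisor structure, there is a uniform proof requiring no case analysis at all: choose a general line $\ell\ni y$ in $\P^2$ and let $\eta=\langle L,\ell\rangle$; then $C:=X\cap\eta$ is smooth at each $x_i$ (for general $\eta$ one has $\eta\neq T_{x_i}X$), $\lambda\cap C=\lambda\cap X$ as schemes because $\lambda\subset\eta$, and the critical scheme of $\pi_L|_C:C\to\ell$ is the scheme $R\cap C$, whose length at $x_i$ equals $n_i-1$ by the one-variable computation (a map $t\mapsto t^{n_i}\cdot\text{unit}$ has critical scheme of length $n_i-1$ in characteristic zero). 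The projection formula then yields
\[
\operatorname{mult}_y B=(B\cdot\ell)_y=\sum_i (R\cdot C)_{x_i}=\sum_i (n_i-1)=b(\lambda),
\]
valid for every $L\notin X$, with your delicate local analysis of $R$ needed nowhere.
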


\subsection{Filling families and focal points}
We refer the reader to Section $5$ of \cite{SernFam} for a general introduction to filling families and to Section $1$ of \cite{CilChiant} for a general treatment of the focal locus closely related to our problem. In the following we will use the notation of \cite{CF}.

Let $\X$ be a flat family of lines in $\P^r$, parametrized by an integral base scheme $S$. We have $\X \subset S \times \P^r$, and then we can consider the two projections $q_1$ and $q_2$ restricted to $\X$ as shown in the following diagram:

\begin{equation*}
	\xymatrix{
&  S \times \P^r\ar[dl]_{q_1}\ar[dr]^{q_2} & \\
S & \X\ar[l]^{f_1}\ar[r]_{f_2} & \P^r
}
\end{equation*}
   
\noindent
where we have the maps $f_1:=q_1|_{\X}:\X \to S$ and $f_2:=q_2|_{\X}:\X \to \P^r$. Let $\sT_Z$ denote the sheaf $\cH om(\Omega^1_Z,\sO_Z)$ and $\cN_Z$ the normal sheaf for any scheme $Z$. Then we get the short exact sequence

\begin{equation*}
0 \rightarrow \sT_{\X} \rightarrow \sT_{S\times \P^r\mid \X}\rightarrow \cN_{\X\mid S\times \P^r}\rightarrow 0
\end{equation*}

\noindent
with map induced by $q_2$:
 
 \begin{equation}\label{phi}
\phi: \cH om(\Omega^1_{S\times \P^r\mid \P^r},\sO_{S\times \P^r}) \rightarrow \cN_{\X\mid S \times \P^r}
\end{equation}

\noindent
called the global characteristic map of the aforementioned family.
The map \eqref{phi} and the map induced by the differential $df_2:\sT_{\X} \to q_2^*(\sT_{\P^r}|_{\X})$ have the same kernel (see \cite{CF}).

\begin{defn} \label{defn:focalscheme}
The kernel $ker(\phi)=ker(df_2)=: \cF$  is called the \textbf{focal sheaf} of the family $\X$. Its support $\cF (\X)$ is called the \textbf{focal scheme} of $\X$.
\end{defn}
Notice that the focal sheaf is a torsion sheaf and the dimension of the focal scheme is strictly less than the dimension of its underlying family $\X$. 

\begin{defn} \label{defn:fillingfamily}
Assume now $\X$ to be a family of $h$-dimensional linear subspaces of $\P^r$ parametrised by $S$. We will call $\X$ a \textbf{filling family} if the following conditions are satisfied:
\begin{enumerate}[label=(\roman*)]
\item $\dim(S)=r-h$.
\item the projection $f_2: \X \to \P^r$ is dominant.
\end{enumerate}
\end{defn}

It follows from the definition that the focal scheme describes the set of ramification points of the map $f_2$, so its image under this map actually defines the branch locus.

We have the following:
\begin{prop}[Proposition 4.1, \cite{CF}] \label{prop:CFdegree}
Let $\X$ be a filling family of h-dimensional linear subspaces of $\P^r$, then  the focal scheme of the fibre over the general point $s\in S$, $\cF (\X_s)$, is defined by a hypersurface of degree $r-h$ in $\X_s\cong \P^h$.
\end{prop}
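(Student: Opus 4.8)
The plan is to trivialise the family over a general point of the base in explicit local coordinates, write the differential $df_2$ as an $(r+1)\times(r+1)$ matrix on the affine cone over $\P^r$, and recognise the focal scheme of the fibre as the vanishing of its determinant, whose degree in the fibre coordinates can then be read off directly. Concretely, I would fix a general point $s_0\in S$ and choose local analytic (or \'etale) coordinates $t_1,\dots,t_{r-h}$ on $S$ near $s_0$, using that $\dim S=r-h$. Lifting to the cone $\bC^{r+1}$ over $\P^r$, the subspace $\X_s$ corresponds to an $(h+1)$-dimensional linear subspace spanned by vectors $v_0(t),\dots,v_h(t)$ depending holomorphically on $t$, so that a point of the fibre is $\bigl[\sum_{i=0}^h \mu_i v_i(t)\bigr]$ with $[\mu_0:\cdots:\mu_h]\in\P^h\cong\X_s$. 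At the level of cones the second projection becomes the parametrisation $\tilde f_2(t,\mu)=\sum_{i=0}^h \mu_i v_i(t)$.

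The next step is to identify the focal condition. By Definition \ref{defn:focalscheme} the focal scheme is $\ker(df_2)$; at a point $(s,[x])$ with $x=\sum_i\mu_i v_i(t)$ the map $df_2$ fails to be an isomorphism exactly when the images of all tangent directions do not span $\sT_{\P^r,[x]}=\bC^{r+1}/\langle x\rangle$. The fibre directions contribute $\partial_{\mu_i}\tilde f_2=v_i(t)$ and the base directions contribute $\partial_{t_j}\tilde f_2=\sum_i \mu_i\,\partial_{t_j}v_i(t)$, giving $r+1$ vectors in $\bC^{r+1}$; since $x$ itself lies in the span of the $v_i$, the rank drops precisely when these $r+1$ vectors are linearly dependent. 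Hence, on the fibre $\X_s$, the focal scheme is cut out by the $(r+1)\times(r+1)$ determinant
\[
\Delta(\mu):=\det\Bigl(\,v_0,\ \dots,\ v_h,\ \textstyle\sum_i \mu_i\,\partial_{t_1}v_i,\ \dots,\ \sum_i \mu_i\,\partial_{t_{r-h}}v_i\,\Bigr)=0.
\]

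Reading off the degree is now immediate. The first $h+1$ columns are independent of $\mu$, while each of the remaining $r-h$ columns is linear in $\mu$; since the determinant is multilinear in its columns, $\Delta$ is homogeneous of degree $r-h$ in $\mu$. Provided $\Delta\not\equiv 0$ it therefore defines a hypersurface of degree $r-h$ in $\X_s\cong\P^h$. For the non-vanishing I would invoke the filling hypothesis of Definition \ref{defn:fillingfamily}: the total space $\X$ has dimension $\dim S+h=r$, and $f_2:\X\to\P^r$ is a dominant morphism between irreducible varieties of the same dimension $r$, hence generically finite and, in characteristic zero, generically \'etale. Thus $df_2$ is an isomorphism at the generic point and the non-focal locus is open and dense in $\X$; as it dominates $S$ it meets the general fibre, so $\Delta|_{\X_s}\not\equiv 0$ for general $s$, which completes the argument.

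The step I expect to require the most care is the passage through the Euler relation in the projective setting: one must verify that the projective differential $df_2$ drops rank \emph{exactly} when the affine matrix above is singular, rather than when some smaller minor vanishes, which is what justifies using the single $(r+1)\times(r+1)$ determinant $\Delta$. Related to this, a fully scheme-theoretic statement would need the determinantal equation $\Delta$ to agree with the restriction of the focal ideal $\ker(\phi)$ to the fibre, and not merely with its support; establishing this compatibility of the ideal generated by $\Delta$ with $\ker(\phi)$ is the technical heart of the proof, the degree count itself being then a formal consequence of the multilinearity observed above.
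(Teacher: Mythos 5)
Your proposal is correct and takes essentially the same route as the paper (and \cite{CF}): the focal scheme on a general fibre is exhibited as the determinantal locus of the characteristic map restricted to that fibre, with entries linear in the fibre coordinates. Indeed, expanding your $(r+1)\times(r+1)$ cone determinant $\Delta$ along the $\mu$-independent columns $v_0,\dots,v_h$ yields exactly the determinant of the $(r-h)\times(r-h)$ matrix $A_s$ of linear forms representing $\sT_{S,s}\otimes\sO_{\X_s}\to\cN_{\X_s\mid\P^r}\cong\sO_{\X_s}(1)^{\oplus(r-h)}$ used in the paper, and your generic-\'etaleness argument for $\Delta\not\equiv 0$ on the general fibre is the explicit form of the paper's observation that, by dominance of $f_2$, the focal sheaf is torsion.
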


Thus, for 2-dimensional families of lines in $\P^3$ the fibre over the general point is defined by lines $\X_s\cong \P^1$, hence we can describe the focal locus at each fibre as a surface of degree $2$. Consider the map in \eqref{phi} restricted to the fibre over a general point $s\in S$, then we have
\begin{equation*}
\xymatrix{
\sT_{S,s} \otimes \sO_{\X_s}\ar[r]\ar@{}[d]|*=0[@]{\cong} & \cN_{\X_s\mid \P^3}\ar@{}[d]|*=0[@]{\cong}\\
\sO_{\X_s}^{\oplus 2}&\sO_{\X_s}(1)^{\oplus 2}
}
\end{equation*}

\noindent
so that locally the map can be described by a matrix $A_s$ of rank $2$ with entries of degree $1$. Hence the locus cut out by the focal scheme on the general line $\lambda$ of $\X$ is a scheme of dimension $0$ and degree $3-1=2$ defined by $\{det(A_s)=0\}$. The solution will then give either two distinct points of multiplicity $1$ or one of multiplicity $2$.\\

Recall that a surface is said to be developable if it is defined as the locus of lines tangent to a curve or as a cone (for more details, see \cite{GH_LocalDifferential}). We have the following:

\begin{prop}[Proposition 5.1, \cite{CF}] \label{prop:CFtangentfoci}
Let $X$ be a non-developable surface in $\P^3$ and let $\X$ be a filling family of $\P^3$ such that its general member $\lambda$ is tangent to $X$ at a point $x$. Then $x$ defines the focal locus on $\lambda$ and if the contact order of $\lambda$ with $X$ at $x$ is $2$, then $x$ is a focus of multiplicity $2$.
\end{prop}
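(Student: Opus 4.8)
The plan is to compute the focal scheme on $\lambda$ directly from the characteristic map, working in local analytic coordinates adapted to the tangency, exactly in the spirit of the discussion preceding Proposition~\ref{prop:CFdegree}. First I would centre affine coordinates $(u_1,u_2,w)$ at $x$ with $T_x(X)=\{w=0\}$, so that locally $X=\{\,w=\tfrac12 q(u_1,u_2)+O(3)\,\}$, where $q$ is the second fundamental form of $X$ at $x$ and $q(\cdot,\cdot)$ its polarisation. Since $X$ is non-developable its Gauss map is finite, so $q$ is non-degenerate at a general point $x\in X$; this is the only place the hypothesis is used. A member of $\X$ tangent to $X$ near $x$ is recorded by its contact point $y(s)\in X$ and its tangent direction $\delta(s)\in T_{y(s)}(X)$, with $\lambda$ corresponding to $y=0$ and a direction $d=(d_1,d_2,0)\in T_x(X)$.

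Next I would differentiate the family along two independent vectors of $\sT_{S,s}$ to obtain the two velocity sections of $\cN_{\lambda\mid\P^3}\cong\sO_\lambda(1)^{\oplus2}$ forming the columns of the matrix $A_s$. Writing $\dot y^{(i)},\dot\delta^{(i)}$ for the corresponding derivatives of $y,\delta$ and using the affine parameter $t$ on $\lambda$ with $t=0$ at $x$, a short computation expresses the $i$-th column, in the frame of $\cN_{\lambda\mid\P^3}$ given by a direction of $T_x(X)$ transverse to $d$ together with $\partial_w$, as $\big(\,d\wedge(\dot y^{(i)}+t\dot\delta^{(i)}),\ t\,q(d,\dot y^{(i)})\,\big)$; the normal, $w$-component is exactly $t\,q(d,\dot y^{(i)})$ because to first order in $y$ the $w$-slope of $T_y(X)$ equals $q(y,\cdot)$. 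Therefore
\[
\det A_s \;=\; t\Big[\big(d\wedge\dot y^{(1)}\big)\,q(d,\dot y^{(2)})-\big(d\wedge\dot y^{(2)}\big)\,q(d,\dot y^{(1)})\;+\;t\,(\cdots)\Big].
\]

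The overall factor $t$ already gives the first assertion: the point $x$ ($t=0$) always lies in $\cF(\X_s)$, so $x$ defines the focal locus on $\lambda$. Geometrically, for every deformation direction the normal velocity of the moving line at its contact point lands in $T_x(X)/T_x(\lambda)$, a one-dimensional subspace of $\cN_{\lambda,x}$, so the characteristic map drops rank there. For the second assertion I would study the constant term $c_1=(d\wedge\dot y^{(1)})\,q(d,\dot y^{(2)})-(d\wedge\dot y^{(2)})\,q(d,\dot y^{(1)})$ of the bracket. This is the value on $(\dot y^{(1)},\dot y^{(2)})$ of the alternating form $M\wedge L$, where $L(\cdot)=q(d,\cdot)$ and $M(\cdot)=d\wedge\cdot$ are linear forms on $T_x(X)$. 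When the contact order of $\lambda$ at $x$ is $2$, i.e.\ $d$ is an asymptotic direction and $q(d,d)=0$, the forms $L$ and $M$ both vanish on $\langle d\rangle$, hence are proportional, so $M\wedge L=0$ and $c_1\equiv0$. Thus $t^2\mid\det A_s$, the two points of the degree-$2$ scheme $\cF(\X_s)$ collide at $x$, and $x$ is a focus of multiplicity $2$. (In the generic contact-order-$1$ case $q(d,d)\neq0$, the forms $L,M$ are independent, $c_1\neq0$, and the second focus lies away from $x$, so $x$ is then simple.)

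The bulk of the work, and the only genuinely delicate step, is the explicit identification of the columns of $A_s$: one must check that the motion of the contact point contributes the tangential term $d\wedge\dot y^{(i)}$ while the rotation of the tangent plane along $X$ contributes precisely the normal term $t\,q(d,\dot y^{(i)})$, so that the second fundamental form enters through the linear factor of $\det A_s$. Granting this, both conclusions are formal. The conceptual heart is the collinearity $L\parallel M$ along asymptotic directions, which forces the second focus to merge with the contact point; non-degeneracy of $q$, i.e.\ non-developability of $X$, is exactly what keeps $L$ and $M$ independent off the asymptotic locus and makes the multiplicity jump occur precisely in the contact-order-$2$ case.
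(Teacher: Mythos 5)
The paper offers no proof of this proposition at all: it is quoted verbatim from \cite{CF} (Proposition 5.1), so there is no internal argument to compare yours against. Your computation is essentially the standard one behind the cited result, and it is correct as a computation: restricting the characteristic map to a general member, identifying the two columns of $A_s$ via the motion of the contact point and the rotation of the tangent plane, one gets $\det A_s = t\bigl(c_1 + t(\cdots)\bigr)$ with $c_1=(L\wedge M)(\dot y^{(1)},\dot y^{(2)})$, $L(\cdot)=q(d,\cdot)$, $M(\cdot)=d\wedge\cdot$; the factor $t$ gives the first assertion, and $L\wedge M=0$ holds precisely when $q(d,d)=0$.

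The genuine problem is your translation of $q(d,d)=0$ into ``contact order $2$''. In this paper (see the proof of Lemma \ref{lemma:fillingfamily} and the definition of $\cF_X$), contact order means intersection multiplicity: a simple tangent has contact order $2$, and $q(d,d)=0$ means $d$ is an asymptotic direction, i.e.\ contact order $\geq 3$. So what you have actually proved is: the tangency point is a focus of multiplicity $2$ when the contact order is \emph{at least $3$}; and your own parenthetical shows that for contact order exactly $2$ (with the contact point moving to first order) the tangency point is a \emph{simple} focus. Read in the paper's conventions, your argument therefore disproves the statement as literally printed and proves a corrected version instead. The corrected version is also the only one consistent with how the proposition is used in Proposition \ref{propdimzero}: there a bitangent line is said to contribute two foci of multiplicity $1$ and a flex tangent one focus of multiplicity $2$; under the literal reading, the two contact points of a bitangent (each of contact order $2$) would give a focal scheme of degree $\geq 4$, contradicting Proposition \ref{prop:CFdegree}. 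You should state this discrepancy explicitly rather than silently redefining ``contact order $2$'' to mean ``asymptotic direction'' --- as written, the hypothesis under which you establish multiplicity $2$ is the opposite of the hypothesis in the quoted formulation. A smaller point: non-developability is not used where you claim; the multiplicity-$2$ direction needs no nondegeneracy of $q$ (if $q(d,\cdot)\equiv 0$ then $L=0$ and $L\wedge M=0$ anyway), while the converse direction needs only $q(d,d)\neq 0$ together with the linear independence of $\dot y^{(1)},\dot y^{(2)}$ --- a genericity assumption on the family, not on $X$, which fails for instance when the contact points sweep out only a curve.
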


Finally, recall the following definition.
\begin{defn}\label{defn:fundamentalpts}
Let $\X$ be a filling family of lines in $\P^3$. A point $x \in \P^3$ is called a \textbf{fundamental point} if there is a $1$-dimensional subfamily of $\X$ passing through $x$.
\end{defn}

\section{The case of surfaces in $\P^3$} \label{sec:nupos}
This section is dedicated to the proof of Theorem \ref{thm:main}. As summarized in the introduction, we will first study the focal scheme of the family of lines with branching weight strictly bigger than $1$, as it is done in \cite{CF}; this will lead to the proof Theorem \ref{thm:main}, carried out at the end of this section.

Consider a smooth irreducible surface $X$ in $\P^3$ of degree greater than $1$, and consider the following family of lines in $\P^3$:
$$\cG_X=\{\lambda \in \G(1,3)\ |\ b(\lambda) > 1\}.$$
Moreover, given a point $L \in \P^3 \smallsetminus X$, we want to consider the subfamily
$$\cG_X(L)=\{\lambda \in \G(1,3)\ |\ b(\lambda) > 1 \text{ and } L \in \lambda\}.$$

\noindent
consisting of those elements of $\cG_X$ passing through the point $L$.
Notice that the following subfamilies of the Grassmannian $\G(1,3)$ are algebraic: the family $\cT_X$ of lines tangent to $X$, the family $\cT_X(L)$ of lines tangent to $X$ and passing through $L$, the family $\cG_X$ and $\cG_X(L)$. In particular, $\cG_X(L)$ is also a subfamily of $\cT_X(L)$.

\begin{lemma} \label{lemma:fillingfamily}
$\cG_X$ is a filling family of lines of $\P^3$.
\end{lemma}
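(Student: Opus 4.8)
The plan is to verify the two conditions in Definition \ref{defn:fillingfamily} for $h=1$ and $r=3$. First I would check condition (ii), the easier one: I would show that the projection $f_2 : \cG_X \to \P^3$ is dominant. A line $\lambda$ with $b(\lambda) > 1$ passes through at least one point where the intersection type is not $(1,\ldots,1)$, so this amounts to showing that asymptotic tangent lines (those with some $n_i \geq 3$) and bitangent lines (those with two points of multiplicity $\geq 2$) together sweep out a dense subset of $\P^3$. I would argue this by noting that through a general point $x \in \P^3$ one can draw lines tangent to $X$ at two distinct points, since each tangency imposes one condition: the tangent lines to $X$ through a fixed general point form a $1$-dimensional family, and requiring a second tangency is one further condition, which is generically satisfiable for a surface of degree $d \geq 2$. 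Hence a general point of $\P^3$ lies on a bitangent line, so $f_2$ is dominant.

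The main work is condition (i), the dimension count $\dim(S) = r - h = 2$, where $S$ is the base parametrizing $\cG_X$. Since $\cG_X \subset \G(1,3)$ and $\G(1,3)$ has dimension $4$, I must show that the locus of lines with branching weight strictly bigger than $1$ has dimension exactly $2$. The condition $b(\lambda) \geq 1$, i.e. $\lambda \in \cT_X$, cuts out a hypersurface in $\G(1,3)$, so $\dim \cT_X = 3$; this reflects that the tangent lines sweep out the $2$-parameter family of tangent planes, each contributing a $1$-parameter family of tangent lines. Imposing $b(\lambda) > 1$ means either upgrading a simple tangency to an asymptotic one ($n_i \geq 3$) or adding a second tangency point. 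Each of these is one additional condition on the $3$-dimensional family $\cT_X$, so I expect the strict inequality to drop the dimension by one, giving $\dim \cG_X = 2$.

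The hard part will be making this dimension count rigorous, in particular ruling out that some component of $\cG_X$ has dimension $3$ (which would force $\cG_X$ to coincide with $\cT_X$ on that component, an absurdity since a general tangent line is simple tangent). To control the two contributions separately I would use the stratification already introduced in the excerpt: $\cG_X = \cB_X \cup \cF_X$ up to closure, where $\cB_X$ is the bitangent locus and $\cF_X$ the asymptotic tangent locus. For $\cF_X$, the inflectional (asymptotic) tangent lines are directions along which the tangent plane meets $X$ to order $\geq 3$; since the asymptotic directions at a general point of $X$ are finite (two, away from parabolic and planar points), and planar points are finite by Lemma \ref{lemma:planarpts}, the asymptotic tangent lines form at most a $2$-dimensional family parametrized by points of $X$. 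For $\cB_X$, I would parametrize a bitangent by its two points of tangency on $X$, subject to the collinearity-of-tangent-directions condition: this is a $2 \cdot 2 - 2 = 2$ dimensional count. Assembling these, both strata have dimension $\leq 2$, and combined with the dominance of $f_2$ (which forces dimension $\geq 2$), I conclude $\dim S = 2$, so $\cG_X$ is a filling family.
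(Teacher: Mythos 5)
Your outline for condition (i) is essentially the paper's own argument: stratify $\cG_X$ into the asymptotic-tangent locus $\cF_X$ and the bitangent locus $\cB_X$, bound $\cF_X$ using finiteness of planar points (Lemma \ref{lemma:planarpts}) together with finiteness of the higher-order tangent directions at a general point, and bound $\cB_X$ by a parameter count (the paper simply cites \cite{HarrisAG}, Example 15.21, which is your $2\cdot 2-2=2$ count made rigorous). One caveat there: imposing conditions bounds the dimension of each component from \emph{below}, not from above, so the inequality $\dim \cB_X \leq 2$ --- equivalently the fact that the general tangent line is not bitangent, which you dismiss as ``an absurdity'' --- is itself a statement requiring proof; that is exactly what the cited example supplies. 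This part is fixable. The genuine gap is in condition (ii), which you call ``the easier one'' but which is where the paper does almost all of its work.

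Your justification of dominance --- ``requiring a second tangency is one further condition, which is generically satisfiable for a surface of degree $d \geq 2$'' --- assumes precisely what must be proved. One algebraic condition on a $1$-dimensional family need not have any solutions at all. Concretely, by Proposition \ref{prop:cfbranch} the lines through $L$ with $b(\lambda)>1$ correspond exactly to the \emph{singular points} of the branch curve $B$ of $\pi_L$, so your claim is equivalent to saying that $B$ is singular for general $L$, and a plane curve has no a priori reason to be singular. Indeed your claim is false for $d=2$: on a smooth quadric every line not contained in $X$ has $b(\lambda)\leq 1$, since a bitangent or asymptotic tangent would meet $X$ with total multiplicity at least $3>2$; hence $\cG_X$ contains no lines at all and is not filling, which shows that any correct proof must use $d\geq 3$ in an essential way. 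The paper closes this gap with a genus computation: the ramification curve $R=\{f=\partial f/\partial z=0\}$ is a complete intersection of degree $d(d-1)$, adjunction gives $g(R)\leq\frac{1}{2}\left(d(d-1)(2d-5)+2\right)$, and $B=\pi_L(R)$ has the same degree and no larger genus, so it cannot satisfy the smooth-plane-curve genus formula $\frac{1}{2}(\deg(B)-1)(\deg(B)-2)$ once $d\geq 3$; therefore $B$ is singular and every general point of $\P^3$ lies on a line of $\cG_X$. Some argument of this strength (or an equivalent Pl\"ucker-type count of nodes and cusps of the branch curve) is indispensable, and nothing in your proposal plays this role.
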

\begin{proof}
Let us first study the dimension of $\cG_X$. We can define two subfamilies $\cF_X$ and $\cB_X$ of $\cG_X$ given by flex tangent lines and bitangent lines, respectively.
\begin{align*}
\cF_X&=\{\lambda \in \G(1,\P^3)\ |\ \exists \text{ } q \in X \text{ with } m_q(\lambda,X) \geq 3\}\\
\cB_X&=\{\lambda \in \G(1,\P^3)\ |\ \exists \text{ } p \neq q \in X \text{ with } m_p(\lambda,X), m_q(\lambda,X) \geq 2\}
\end{align*}

\noindent
where $m_q(\lambda,X)$ denotes the intersection multiplicity of the line $\lambda$ at $q\in X$.

Take a general point $x \in X$ and consider the plane tangent to $X$ at $x$. The curve $C_x$ cut out by this plane has a singularity at $x$. We can assume $x$ to be non planar, since Lemma \ref{lemma:planarpts} ensures the number of planar points on $X$ to be finite. Hence, the general line tangent to $X$ at $x$ has contact order $2$. Via local analysis of the singularity of $C_x$ near $x$, we get a finite number of lines $\lambda_i$ which have contact order strictly greater than $2$, and thus they belong to the space $\cF_X$. This shows that $\cF_X$ has dimension two. 
It is also possible to prove that $\cB_X$ has dimension two (see \cite{HarrisAG}, Example 15.21). Either one of these two arguments proves that the family $\cG_X$ has dimension two, and this is the first condition in Definition \ref{defn:fillingfamily} in order to have a filling family. For the second condition, we have to prove that the map $\cG_X \to \P^3$ is dominant.
Notice that, by Proposition \ref{prop:cfbranch}, this is equivalent to asking if the branch locus of the projection of $X$ from a general point of $\P^3$ is singular. Let $d$ be the degree of $X$. We can assume the point $L \in \P^3(x:y:z:t)$ to be $(0:0:0:1)$, such that the projection becomes
$$\pi_L:(x:y:z:t)\mapsto(x:y:z)$$
If $f$ is the polynomial which defines $X$, so that $\deg(f)=d$, the ramification divisor of $\pi_L$ is of the form
$$R:=\{f=\frac{\partial f}{\partial z} = 0\}.$$
Notice that $R$ is a curve, this proves in particular that $\cT_X(L)$ has dimension $1$ for $X$ smooth of degree greater than $1$. 
It follows immediately that $\deg(R)=d(d-1)$. Moreover by the adjunction formula we have
$$K_R=(K_X + R)|_R =(\sO_X(d-4) + R)|_R=\sO_R(2d-5),$$
and then $\deg(K_R)=d(d-1)(2d-5)$. So we have
$$g(R)\leq\frac{1}{2}\left(d(d-1)(2d-5)+2\right).$$

Consider now the branch locus of the projection $B:=\pi_L(R)$. Since the general tangent line to $X$ is simply tangent, $B$ has the same degree of $R$. Moreover $g(B)\leq g(R)$. Since $B$ is planar, it is non-singular if and only if 
$$g(B)=\frac{1}{2}(\deg(B)-1)(\deg(B)-2).$$

By working out the explicit formula in terms of $d$, we get that $B$ is always singular for $d\geq 3$. This shows that also the second condition of Definition \ref{defn:fillingfamily} is satisfied, concluding the proof.
\end{proof}

Let $\cP_X \subset \P^3 \smallsetminus X$ be the locus of points having the property that every tangent line to $X$ passing through one of these points is actually bitangent or flex tangent to $X$. $\cP_X$ is defined as
$$\cP_X=\{L \in \P^3 \smallsetminus X \st \cT_X(L)=\cG_X(L)\}.$$
We want now to consider only lines in $\cG_X$ that actually pass through some point in $\cP_X$. To this aim we will introduce the following incidence variety

$$\cH:=\{(x,\lambda) \subset \cG_X \times \cP_X \st x \in \lambda \},$$
with the two projections
\begin{equation*}
	\xymatrix{
& \cH\ar[dl]_{\pi_1}\ar[dr]^{\pi_2}& \\
\cG_X & & \cP_X
}
\end{equation*}

We are interested in the new family of lines $\widetilde{\cG}_X$, defined as the image under the map $\pi_1$ of $\cH$, or equivalently given by
$$\widetilde{\cG}_X=\{\lambda \in \cG_X\ |\ \exists \text{ } L \in \cP_X \text{ with } L \in \lambda\}$$
Notice that $\widetilde{\cG}_X$ can be also written as the union of $\cG_X(L)$ for all $L$ in $\cP_X$.

\begin{prop} \label{prop:overlinefilling}
If the dimension of $\cP_X$ is greater than or equal to one, then $\widetilde{\cG}_X$ is a filling family of lines in $\P^3$.
\end{prop}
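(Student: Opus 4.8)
The plan is to verify the two defining conditions of a filling family (Definition~\ref{defn:fillingfamily}) for $\widetilde{\cG}_X$, namely that its base scheme has dimension $r-h = 3-1 = 2$ and that the map $f_2 \colon \widetilde{\cG}_X \to \P^3$ is dominant. Throughout I will exploit the description $\widetilde{\cG}_X = \bigcup_{L \in \cP_X} \cG_X(L)$ together with the hypothesis $\dim \cP_X \geq 1$.

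\emph{Dominance via fundamental points.}
First I would establish the key dimension count for each fibre $\cG_X(L)$ with $L \in \cP_X$. By construction of $\cP_X$, every tangent line through $L$ is bitangent or flex tangent, so $\cG_X(L) = \cT_X(L)$. In the proof of Lemma~\ref{lemma:fillingfamily} it was shown that the ramification divisor $R = \{f = \partial f/\partial z = 0\}$ of the projection $\pi_L$ is a curve; since $\cT_X(L)$ is in bijection with the image of $R$ under $\pi_L$, this gives $\dim \cT_X(L) = 1$, hence $\dim \cG_X(L) = 1$ for $L \in \cP_X$. In the language of Definition~\ref{defn:fundamentalpts}, every point $L \in \cP_X$ is a fundamental point of the family $\cG_X$: there is a one-dimensional subfamily of $\cG_X$ passing through it.

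\emph{Assembling the dimension count.}
Next I would run the incidence-variety dimension count on $\cH$ using the projection $\pi_2 \colon \cH \to \cP_X$. Each fibre $\pi_2^{-1}(L)$ is the set of pairs $(x,\lambda)$ with $\lambda \in \cG_X(L)$ and $x \in \lambda$; since $\dim \cG_X(L) = 1$ and each line contributes one dimension's worth of points $x$, each such fibre has dimension $2$. With $\dim \cP_X \geq 1$ this forces $\dim \cH \geq 3$, and the image $\widetilde{\cG}_X = \pi_1(\cH)$ therefore has dimension at least $2$. On the other hand $\widetilde{\cG}_X \subset \cG_X$, which has dimension exactly $2$ by Lemma~\ref{lemma:fillingfamily}, so $\dim \widetilde{\cG}_X = 2$, giving condition~(i). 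For condition~(ii), I would argue that the total space of lines in $\widetilde{\cG}_X$ sweeps out all of $\P^3$: a general point $x \in \P^3$ lies on lines through each $L \in \cP_X$, and as $L$ varies over a curve in $\cP_X$ these secant/tangent lines cover a positive-dimensional locus forcing the image of $f_2$ to be all of $\P^3$; equivalently, the composite $\cH \to \widetilde{\cG}_X \to \P^3$ has image of dimension $3$ once one checks the generic fibre of $f_2$ restricted to $\widetilde{\cG}_X$ is finite.

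\emph{Main obstacle.}
The subtle point — and the step I expect to be the main obstacle — is the dominance in condition~(ii). Having $\dim \widetilde{\cG}_X = 2$ is necessary but not sufficient: one must rule out that all the lines in $\widetilde{\cG}_X$ lie in a proper subvariety of $\P^3$ (for instance, that they all pass through the points of $\cP_X$ in a degenerate configuration, filling only a surface or a cone rather than all of $\P^3$). The cleanest way to close this gap is to compute the dimension of the image of $f_2 \colon \widetilde{\cG}_X \to \P^3$ directly: since $\dim \widetilde{\cG}_X = 2$, dominance is equivalent to the general fibre of $f_2$ being finite, i.e.\ to a general point $x \in \P^3$ lying on only finitely many lines of $\widetilde{\cG}_X$. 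I would verify this by noting that through a general $x$ there are only finitely many lines joining $x$ to the one-dimensional locus $\cP_X$ that additionally satisfy the branching condition $b(\lambda) > 1$, ruling out a one-dimensional fibre and hence establishing that $f_2$ is dominant.
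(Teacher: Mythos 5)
Your dimension count for condition (i) is sound and is essentially the paper's: for $L \in \cP_X$ one has $\cG_X(L)=\cT_X(L)$, which is a curve by the ramification-divisor argument of Lemma \ref{lemma:fillingfamily}, so the incidence correspondence over $\cP_X$ has positive-dimensional fibres, and combining $\dim \cP_X \geq 1$ with $\widetilde{\cG}_X \subset \cG_X$ and $\dim \cG_X = 2$ forces $\dim \widetilde{\cG}_X = 2$.

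The dominance step, however, contains a genuine gap, and it sits exactly at the point you yourself flagged as the main obstacle. Your proposed criterion --- that dominance of $f_2$ is equivalent to a general point $x \in \P^3$ lying on only finitely many lines of $\widetilde{\cG}_X$ --- is false in the direction you need. If the image of $f_2$ were a proper subvariety $W \subsetneq \P^3$ (the degenerate configuration you want to exclude), then a general point of $\P^3$ would lie on \emph{no} line of $\widetilde{\cG}_X$ at all: the fibre over it is empty, hence finite, and your verification succeeds vacuously while proving nothing. Dominance is instead equivalent to finiteness of the fibre over a general point \emph{of the image}, and in the bad scenario those points are precisely fundamental points of the family (Definition \ref{defn:fundamentalpts}), where the fibres are one-dimensional; their behaviour cannot be inferred from general points of $\P^3$, so the argument is circular. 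Note also that your preliminary ``sweeping'' claim fails: for a general $x \in \P^3$ and $L \in \cP_X$, the line $\langle x, L \rangle$ is a simple secant of $X$, so it does not belong to $\widetilde{\cG}_X$; nothing in your proposal exhibits even one line of the family through a general point. The paper closes the gap differently: from $\dim \widetilde{\cG}_X = 2 = \dim \cG_X$ it deduces that $\widetilde{\cG}_X$ is dense in $\cG_X$, and then dominance is \emph{inherited} from $\cG_X$, whose total space dominates $\P^3$ by Lemma \ref{lemma:fillingfamily} --- a fact you never invoke (you use that lemma only for the dimension statement). Concretely, the total space of $\widetilde{\cG}_X$ contains the preimage of a dense subset of $\cG_X$ under the $\P^1$-fibration $V \to \cG_X$, hence is dense in $V$, and therefore its image in $\P^3$ is dense. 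Your write-up already contains all the needed ingredients (density of $\widetilde{\cG}_X$ in $\cG_X$ is implicit in your dimension count); what is missing is to use that density together with the filling property of $\cG_X$, instead of attempting a direct fibre computation at general points of $\P^3$.
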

\begin{proof}
Define 
\begin{align*}
U&:= \{x \in \P^3  \text{ such that } x \in \lambda \text{ for a certain } \lambda \in \cG_X\} \subset \P^3,\\
\widetilde{U}&:= \{x \in \P^3  \text{ such that } x \in \lambda \text{ for a certain } \lambda \in \widetilde{\cG}_X\} \subset \P^3,\\
V&:=\{(x,\lambda) \subset \P^3 \times \cG_X  \text{ such that } x \in \lambda \} \subset U \times \cG_X,\\
\widetilde{V}&:=\{(x,\lambda) \subset \P^3 \times \widetilde{\cG}_X  \text{ such that } x \in \lambda \} \subset \widetilde{U} \times \widetilde{\cG}_X.\\
\end{align*}
The situation is then summarized in the following diagram

\begin{equation*}
\xymatrix{
  &  \widetilde{V}\ar[dl]_{\widetilde{p}}\ar@{^{(}->}[d]^{i_2}\ar[dr]^{\widetilde{q}} & & &\\
\widetilde{U}\ar@{^{(}->}[d]^{i_1} & V\ar[dl]^p\ar[dr]_q& \widetilde{\cG}_X\ar@{^{(}->}[d]^{i_3} & \cH\ar[dl]^{\pi_1}\ar[dr]_{\pi_2}\ar[l]_{\pi_1} &\\
U & & \cG_X & &  \cP_X
 }
\end{equation*}

The dimension of the general fibre of $\pi_2$ is $1$ and since by hypothesis the dimension of $\cP_X$ is greater than $1$, we get that the dimension of $\cH$ is greater than $2$. The dimension of $\cG_X$ is also $2$, thus the general fibre of $\pi_1$ has dimension $0$. Hence, the preimage of the general line contained in $\cG_X$ is non empty and the map $i_3$ is dominant. As a result, the dimension of $\widetilde{\cG}_X$ is $2$, proving the first condition of Definition \ref{defn:fillingfamily}.

For the second condition, notice that $V$, $U$ and $\cG_X$ are 
non-empty and that the space $\cG_X$ has dimension at least $2$. The fibre of $q$ over a general line $\lambda \in \cG_X$ has dimension $1$, parametrised by the points on the line $\lambda\cong\P^1$. This implies that the dimension of $V$ is 3.
On the other hand, since $\cG_X$ is filling by Lemma \ref{lemma:fillingfamily}, $U$ has dimension $3$ and hence the fibre over a general point $x \in U$ has dimension zero.

Now let us work out the top part of the diagram. As before, $\widetilde{V}$ has dimension $3$, since the general fibre of $\widetilde{q}$ has dimension $1$. The map $i_3$ is dominant, thus a general element $(x,\lambda) \in V$ also belongs to $\widetilde{V}$. But since the diagram commutes, the dimension of $\widetilde{p}^{-1}(x)$ is equal to the dimension of $p^{-1}(x)$, that is 0. Hence we have that the dimension of $\widetilde{U}$ is $3$ and so the map $\widetilde{\cG}_X \to \P^3$ is dominant.
\end{proof}

\begin{prop} \label{propdimzero}
The dimension of $\cP_X$ is zero.
\end{prop}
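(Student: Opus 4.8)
The goal is to prove that $\cP_X$, the locus of points $L \in \P^3 \setminus X$ through which \emph{every} tangent line to $X$ is either bitangent or flex tangent, has dimension zero. The natural strategy is to argue by contradiction: assume $\dim \cP_X \geq 1$, and derive a contradiction using the focal machinery developed in the preliminaries. The key observation is that the condition defining $\cP_X$, namely $\cT_X(L) = \cG_X(L)$, says precisely that \emph{no} line through $L$ is a \emph{simple} tangent to $X$ (one with a single contact point of multiplicity exactly $2$). This is an extremely strong constraint, and I expect it to be incompatible with the generic behavior guaranteed by Proposition \ref{prop:CFtangentfoci}.

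My plan is to exploit the filling family $\widetilde{\cG}_X$ furnished by Proposition \ref{prop:overlinefilling}, which applies exactly under the standing assumption $\dim \cP_X \geq 1$. Since $\widetilde{\cG}_X$ is a filling family of lines in $\P^3$, Proposition \ref{prop:CFdegree} (with $r = 3$, $h = 1$) tells us that the focal scheme on a general member $\lambda$ is a degree-$2$ scheme: either two simple foci or a single focus of multiplicity $2$. Now every member $\lambda$ of $\widetilde{\cG}_X$ lies in $\cG_X$, so it has branching weight $b(\lambda) > 1$, meaning it is bitangent or flex (asymptotic) tangent to $X$. First I would rule out the bitangent case for the generic member, or rather analyze what the foci must be: by Proposition \ref{prop:CFtangentfoci}, for a non-developable surface the points of tangency of $\lambda$ with $X$ must account for the foci on $\lambda$, with a contact-order-$2$ tangency contributing a focus of multiplicity $2$. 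A bitangent line has two distinct contact points each of multiplicity $2$; were both to be foci of multiplicity $2$, the focal scheme on $\lambda$ would have degree at least $4$, contradicting the degree-$2$ bound. This forces strong restrictions on which contact points can be foci.

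The heart of the argument, and the step I expect to be the main obstacle, is translating the focal degree bound into a genuine contradiction with the structure of $\cP_X$. The delicate point is the non-developability hypothesis in Proposition \ref{prop:CFtangentfoci}: I must first check that $X$ is non-developable, which is automatic for a smooth surface in $\P^3$ of degree $\geq 2$ (a smooth surface is not a cone and not the tangent developable of a curve), so this is a routine verification. The real work is showing that a general member $\lambda$ of $\widetilde{\cG}_X$ is forced to be simply tangent — that is, to have a single contact point of multiplicity exactly $2$ — which would immediately contradict $\lambda \in \cG_X$ (where $b(\lambda) > 1$). Concretely, since $\lambda$ passes through some $L \in \cP_X$ and $\dim \widetilde{\cG}_X = 2$ while the points of $\cP_X$ form a family of dimension $\geq 1$, a dimension count on the incidence variety $\cH$ should show that a general $\lambda \in \widetilde{\cG}_X$ has its contact points moving in a way incompatible with all of them being foci. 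The focal analysis pins the foci to the contact points, but the degree-$2$ cap allows at most the multiplicity coming from a single simple tangency; an asymptotic tangent (contact order $\geq 3$) or bitangent would overload the focal scheme.

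To finish, I would combine these observations: the filling family $\widetilde{\cG}_X$ has general member with focal degree $2$, yet every member is bitangent or flex, each of which either produces excess focal multiplicity or fails to place a focus at an honest tangency point, contradicting Proposition \ref{prop:CFtangentfoci}. Therefore the assumption $\dim \cP_X \geq 1$ is untenable, and $\dim \cP_X = 0$. The cleanest route is likely to show directly that the hypothesis $\dim \cP_X \geq 1$ makes $\widetilde{\cG}_X$ a filling family all of whose members are tangent with branching weight $> 1$, and that such a family cannot have focal degree $2$ on its general member unless the generic tangency is simple — the contradiction then being that simple tangency is explicitly excluded by membership in $\cG_X$.
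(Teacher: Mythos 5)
Your setup — contradiction via Proposition \ref{prop:overlinefilling}, the focal degree bound of Proposition \ref{prop:CFdegree}, non-developability of $X$ — matches the paper's, and your treatment of the bitangent case is fine: two contact points of order $2$ would each be a focus of multiplicity $2$ by Proposition \ref{prop:CFtangentfoci}, giving focal degree $\geq 4 > 2$. The genuine gap is the flex (asymptotic) tangent case. Proposition \ref{prop:CFtangentfoci} only asserts that the tangency point is a focus, and gives multiplicity $2$ precisely when the contact order is $2$; for contact order $\geq 3$ it supplies no excess multiplicity. So if the general member of $\widetilde{\cG}_X$ is a flex tangent, its single tangency point can account for the full degree-$2$ focal scheme and no contradiction results. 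Your claim that an asymptotic tangent ``would overload the focal scheme'' is unsupported by the cited propositions, and your fallback plan — showing the general member is forced to be simply tangent — is precisely what your tools cannot produce: nothing in your argument creates that forcing, and trying to conclude it from the focal bound is circular.

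The missing idea is the one the paper prepares with Definition \ref{defn:fundamentalpts}: the points of $\cP_X$ are \emph{fundamental points} of $\widetilde{\cG}_X$. Indeed, for $L \in \cP_X$ the subfamily $\cG_X(L) = \cT_X(L)$ is $1$-dimensional (this is established in the proof of Lemma \ref{lemma:fillingfamily}), so a $1$-dimensional subfamily of $\widetilde{\cG}_X$ passes through $L$, and by Section 4 of \cite{CF} such fundamental points lie in the focal locus. Hence on a general $\lambda \in \widetilde{\cG}_X$ the focal scheme contains both the tangency foci (degree $2$, in the bitangent and flex cases alike) \emph{and} the point $L \in \cP_X \cap \lambda$, which is distinct from the tangency points because $L \notin X$. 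This forces focal degree $\geq 3$, contradicting Proposition \ref{prop:CFdegree} uniformly in both cases. Your ``dimension count on the incidence variety $\cH$'' actually gestures at exactly this fact — a $1$-dimensional family of lines through each point of $\cP_X$ is the fundamental-point condition — but without converting that observation into an extra focus on $\lambda$, your contradiction does not close.
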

\begin{proof}
Notice first that by \cite{GH_LocalDifferential}, since $X$ is smooth of degree greater than or equal to 3, it is non-developable. Moreover, the points of $\cP_X$ are fundamental points of the family $\widetilde{\cG}_X$. According to Section $4$ of \cite{CF}, such points also belong to the focal locus of such a family.

Assume by contradiction that $\cP_X$ has dimension at least one. Then Proposition \ref{prop:overlinefilling} ensures that $\widetilde{\cG}_X$ is a filling family. Consider a general line $\lambda$ in $\widetilde{\cG}_X$: by definition of the family, $\lambda$ would pass through a point in $\cP_X$ that is a focal point. Notice that $\lambda$ can either be a bitangent or a flex tangent to $X$. 
In both cases, by Proposition \ref{prop:CFtangentfoci} we would get either two foci of multiplicity $1$ or one focus of multiplicity $2$. As a result, we would have at least 3 foci (with multiplicities), contradicting Proposition \ref{prop:CFdegree}, for which we should have only 2 solutions to the degree $2$ equation and $A_s$ not identically zero, as the dimension of the focal scheme would be strictly less than the dimension of the family $\X$.
\end{proof}

Let now prove the main result, Theorem \ref{thm:main}. The strategy of the proof consist in showing that the locus $\Wn(X)$ of non-uniform points is contained in the algebraic set $\cP_X$. Proposition \ref{propdimzero} guarantees then that $\cP_X$, and hence $\Wn(X)$, is finite.

\begin{proof}[Proof of Theorem \ref{thm:main}]
Consider a smooth irreducible surface $X$ in $\P^3$. If the degree of $X$ is $1$ or $2$, the result holds trivially because the only possible monodromies are the symmetric groups on $1$ and $2$ elements, respectively. 

Assume the degree $d$ to be greater than or equal to $3$. Consider a point $L \notin \cP_X$, this means that there exists a line $\lambda \in \cT_X(L)$ simple tangent to $X$. For the algebraicity of the family, the dimension of $\cG_X(L)$ will be zero, that is, there are only finitely many elements of $\cT_X(L)$ that are more than simple tangent to $X$.

Hence, if we take the general plane $\eta$ passing through $L$, it cuts $X$ in a curve and does not pass through any of the elements of $\cG_X(L)$. The curve $C:=X \cap \eta$ is smooth and irreducible by Bertini's Theorem. Consider the projection $\pi_L$ and its restriction $\pi_L|_\eta$. By construction, all the lines tangent to $X \cap \eta$ passing through $L$ and are simply tangent to $X \cap \eta$, so, by Lemma 4.6 of \cite{Mir} they correspond to a transposition in the monodromy group $M(\pi_L|_\eta)$. Such a group is then generated by transpositions and so by Lemma \ref{lemma:IndTrans} is isomorphic to $S_d$. Eventually, by Lemma \ref{lemma:lemmainters}, also the group $M(\pi_L)$ is the whole symmetric group, and so $L$ is uniform.

We have proved that $L \notin \cP_X$ implies $L \notin \Wn$, then we have $\Wn \subset \cP_X$.

Proposition \ref{propdimzero} concludes the proof showing that the dimension of $\cP_X$ is zero, hence $\Wn$ is composed by only a finite number of points.
\end{proof}

\section{Cubic surfaces} \label{sec:cubicsurfaces}
Theorem \ref{thm:main} holds also for the case of cubic surfaces. Nevertheless, it is interesting to give a different proof of the result by using automorphisms and moduli spaces. This approach will lead us to the proof of Proposition \ref{prop:genCucCub}, which is the analogous result of \cite{C} for the case of cubic surfaces.

\begin{oss}
Let $X$ be a smooth cubic surface in $\P^3$. If $L$ is not uniform, then the ramification locus $R$ of $\pi_L$ is planar.
Indeed, the Riemann-Hurwitz formula for surfaces gives
$$K_X = f^*K_{\P^2} + 2 R$$
The coefficient $2$ comes from the fact that the preimage under $\pi_L$ of a point in the branch locus consists only of a triple point. Using $K_{\P^2} \cong \sO_{\P^2}(-3)$, $K_X \cong (\sO_{\P^3}(3) + \sO_{\P^3}(-4))|_X = \sO_{\P^3}(-1)|_X$, we get
$$\sO_X(-1) = f^*\sO_{\P^2}(-3) + 2 R$$
Since $X$ is smooth, $f^*\sO_{\P^2}(-3)$ is $\sO_X(-3)$, and so $2 R = \sO_X(2)$ which gives the result, as $R=\sO_X(1)$.
\end{oss}

\begin{prop} \label{prop:smoothcubic}
Let $X$ be a smooth cubic surface. Then the locus $\Wn(X)$ is finite.
\end{prop}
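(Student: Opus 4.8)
The plan is to exploit that, for a degree-three cover, failure of uniformity forces a very rigid global structure. Since $X$ is irreducible, $\pi_L$ is a connected covering of degree $3$, so $M(\pi_L)$ is a transitive subgroup of $S_3$. The only proper transitive subgroup of $S_3$ is the cyclic group $A_3 \cong \Z/3$; hence $L \in \Wn(X)$ precisely when $M(\pi_L) \cong \Z/3$. As $|\Z/3| = 3 = \deg \pi_L$, in this case the extension $K(X)/K(\P^2)$ is already Galois, so the first step is simply to record the equivalence: $L$ is non-uniform if and only if $\pi_L$ is a Galois triple cover with group $\Z/3$. This is the group-theoretic counterpart of the totally ramified picture underlying the preceding Remark.

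Next I would produce from such a cover a genuine automorphism of $X$. Because $L \notin X$, no line through $L$ lies on the cubic, so every fibre of $\pi_L$ is finite; being also proper, $\pi_L$ is a finite morphism, and $X = \underline{\mathrm{Spec}}_{\P^2}(\pi_{L*}\sO_X)$ realises $X$ as the normalisation of $\P^2$ in $K(X)$. The Galois group $\Z/3$ acts on $K(X)$ over $K(\P^2)$, hence on the integral closure $\pi_{L*}\sO_X$, and therefore by biregular automorphisms of $X$ commuting with $\pi_L$. This yields an order-three deck transformation $\sigma \in \aut(X)$. Since $\sO_X(1) = -K_X$ is intrinsic and the restriction $H^0(\P^3, \sO_{\P^3}(1)) \to H^0(X, \sO_X(1))$ is an isomorphism, $\sigma$ acts linearly on $H^0(X, \sO_X(1))$ and thus extends to a projective automorphism of $\P^3$ lying in $\mathrm{PGL}_4(\bC)$.

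I would then pin down $L$ geometrically. As $\sigma$ fixes every fibre of $\pi_L$ setwise, it sends each line through $L$ to itself, and in particular fixes $L$. Diagonalising $\sigma$ with eigenvalues among the cube roots of unity and imposing that it preserve every line through $L$ forces, in suitable coordinates, $\sigma = \mathrm{diag}(1,1,1,\zeta)$ with $\zeta$ a primitive cube root of unity and $L = [0:0:0:1]$; thus $L$ is the unique isolated fixed point of $\sigma$, the remaining fixed locus being the plane $\{x_3 = 0\}$. Finally, a smooth cubic surface has finite automorphism group — this is exactly the content of the classification in \cite{H} — so there are only finitely many order-three elements $\sigma \in \aut(X)$, each contributing a single such isolated fixed point. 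Hence $\Wn(X)$ is contained in this finite set and is finite.

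The main obstacle is the middle step: upgrading the abstract $\Z/3$-Galois action on the function field to a biregular, and then projective, automorphism. Because cubic surfaces are rational, one cannot invoke minimality to promote a birational deck transformation to a morphism; it is the finiteness of $\pi_L$ together with the normality of $X$ that makes the normalisation description — and hence the regularity of $\sigma$ — available. Once $\sigma$ is known to be projective, identifying $L$ as its isolated fixed point and concluding via finiteness of $\aut(X)$ is routine.
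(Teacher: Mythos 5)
Your proof is correct, and it takes a genuinely different route from the paper's. The paper argues by contradiction through hyperplane sections: if $\Wn(X)$ had positive dimension, a general plane $\alpha$ would contain a point non-uniform for $X$, hence (by the contrapositive of Lemma \ref{lemma:lemmainters}) non-uniform for the plane cubic $C = X \cap \alpha$; but by a theorem of van Opstall--Veliche \cite{VM} these sections vary maximally in moduli, so the general one has $j$-invariant $\neq 0$, and Cukierman's results \cite{C} then show such a plane cubic has no non-uniform points at all --- a contradiction. Your argument instead exploits degree-$3$ rigidity directly: non-uniformity forces $M(\pi_L) = A_3$, whose order equals the degree, so $K(X)/K(\P^2)$ is itself Galois; the deck transformation is biregular (finiteness of $\pi_L$ plus normality of $X$), linearizes through $-K_X = \sO_X(1)$, and your fixed-point analysis pins it down as $\mathrm{diag}(1,1,1,\zeta)$ with $L$ its unique isolated fixed point, so finiteness of $\aut(X)$ bounds $\Wn(X)$. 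This is much closer in spirit to the paper's Proposition \ref{prop:genCucCub} and the remark following it, which run the same Galois-to-automorphism mechanism but only for the \emph{general} cubic, where $\aut(X)$ is trivial; you have in effect upgraded that mechanism to every smooth cubic. Your route buys more than the paper's: an effective bound $|\Wn(X)| \leq \#\{\sigma \in \aut(X) \st \sigma^3 = \mathrm{id},\ \sigma \neq \mathrm{id}\}$, the immediate corollary that $\Wn(X) = \emptyset$ whenever $\aut(X)$ has no element of order $3$ (recovering Proposition \ref{prop:genCucCub}), and exact consistency with the Fermat example of Section \ref{sec:cubicsurfaces}, where the four coordinate points are precisely such isolated fixed points. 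What it gives up is any prospect of generalization: the argument hinges on the fact that the unique proper transitive subgroup of $S_3$ has order equal to the degree, making the cover itself Galois, which fails for degree $\geq 4$; the paper's section-plus-moduli strategy is the one that connects to the curve literature and could in principle extend.
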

\begin{proof}
Take a general hyperplane $\alpha$ in $\P^3$ and consider the curve $C:=X \cap \alpha$. Assume by contradiction that $W_n$ is of dimension $1$. Then, there would be at least one point $L$, not uniform for $C$. By Theorem $2$ of \cite{VM}, this curve varies maximally in the moduli spaces of planar cubics as $\alpha$ varies. This means that for the general $\alpha$ this curve will have $j$-invariant different from zero. According to \cite{C}, Remark 2.12, this is exactly the hypothesis we need in order to apply Proposition 2.9, proving that $C$ is uniform and getting a contradiction.
\end{proof}

We can exploit the description of the automorphisms of cubic surfaces in order to obtain a result which goes in the direction of \cite{C}.
\begin{prop} \label{prop:genCucCub}
Let $X$ be a general cubic surface. Then the locus $\Wn(X)$ is empty.
\end{prop}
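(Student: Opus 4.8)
The plan is to characterise non-uniform points of a smooth cubic via Galois theory and then to exploit the rigidity of the general cubic surface. Since $\deg \pi_L = 3$ and $M(\pi_L)$ is a transitive subgroup of $S_3$ (transitivity coming from the irreducibility of $X$), the only possibilities are $M(\pi_L) = S_3$ or $M(\pi_L) = C_3$, the cyclic group of order three. Hence $L \in \Wn(X)$ if and only if $M(\pi_L) = C_3$. Using the identification of the monodromy group with the Galois group of the extension $K(X)/K(\P^2_L)$, the fact that $|C_3| = 3$ equals the degree forces the Galois closure to coincide with $K(X)$, so the extension is already Galois. I would therefore record as the first step that $L$ is non-uniform precisely when $\pi_L \colon X \to \P^2_L$ is a Galois triple cover with group $C_3$.

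The second step turns this Galois structure into a biregular automorphism of $X$. By Lemma \ref{lemma:projfinite} the projection $\pi_L$ is a finite morphism onto $\P^2_L$, and $X$, being smooth, is normal; thus $X$ is the normalisation of $\P^2_L$ in $K(X)$. The $C_3$-action on $K(X)$ fixing $K(\P^2_L)$ then lifts uniquely to an action of $C_3$ on $X$ by automorphisms over $\P^2_L$; let $\sigma$ generate it. So every non-uniform point produces a nontrivial automorphism $\sigma \in \aut(X)$ of order three. Since $X$ is a smooth cubic we have $-K_X \cong \sO_X(1)$, and any automorphism preserves the anticanonical linear system, so $\sigma$ is induced by a linear transformation of $\P^3$; concretely $\sigma$ fixes $L$, acts as the identity on $\P^2_L$, and permutes the three points of $\lambda_X$ on each line $\lambda$ through $L$.

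The final step rules this out for the general member. The classification of automorphisms of smooth cubic surfaces in \cite{H} shows that the cubics carrying a nontrivial automorphism form a proper closed subset of the moduli space; equivalently, a general point of the $\P^{19}$ of cubic forms has trivial stabiliser in $\mathrm{PGL}_4$. Consequently a general cubic surface $X$ satisfies $\aut(X) = \{1\}$, and by the previous step it can admit no non-uniform point, so $\Wn(X) = \emptyset$. I expect the main obstacle to lie in the second step, namely descending the abstract cyclic Galois action on the function field to a genuine biregular automorphism of $X$: the argument hinges on the finiteness of $\pi_L$, which identifies $X$ with the normalisation of $\P^2_L$ in its function field, together with the smoothness of $X$. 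Once these inputs are in place the descent is automatic, and the appeal to \cite{H} is purely classical.
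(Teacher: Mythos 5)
Your proposal is correct and follows essentially the same route as the paper's own proof: identify non-uniformity with $M(\pi_L)=C_3$, deduce via the monodromy--Galois correspondence that $K(X)/K(\P^2_L)$ is Galois, promote the Galois action to a nontrivial automorphism of $X$, and contradict the triviality of $\aut(X)$ for a general cubic from \cite{H}. The only difference is that you spell out the details the paper leaves implicit (the transitive-subgroup argument and the descent of the field automorphism to a biregular one via normalisation), which strengthens rather than changes the argument.
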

\begin{proof}
For a general cubic surface $X$, it is proven in \cite{H} that the automorphism group $\aut(X)$ is the identity. Let $L$ be a point in $\P^3 \setminus X$ and assume by contradiction $L$ to be non-uniform. Since the monodromy group $M(\pi_L)$ is isomorphic to the Galois group, we have that the extension $[k(X):k(\P^2)]$ is Galois. This gives a non-trivial automorphism of $X$ which leads to a contradiction, concluding the proof.
\end{proof}
\begin{oss}
As an alternative proof of the previous proposition, let again $L$ be a point in $\P^3 \setminus X$ and consider the diagram
$$M(\pi_L) \rightarrow S_3 \leftarrow \aut(X|\P^2)$$
where the symmetric group $S_3$ is the automorphism group of the fibre over a non branch point of $\P^2$ and $\aut(X|\P^2)$ is the space of automorphisms of $X$ fixing the plane $\P^2$. Denote by $D$ the image of $\aut(X|\P^2)$ in $S_3$. As proved in Proposition 1.4 of \cite{C}, $D$ is the centralizer of $M(\pi_L)$ inside $S_3$. Assume by contradiction that there exist a point $L$ which is not uniform. That means $M(\pi_L)$ is the alternating group $A_3$. Since the centralizer of $A_3$ in $S_3$ is the whole $A_3$, we have that $\aut(X|\P^2)$ is not trivial. Hence $\aut(X)$ also is not trivial and that is a contradiction.
\end{oss}

\subsection{The Fermat cubic surface}
A meaningful example for this case, is the Fermat cubic surface. Let $X:=\{F=0\}$ be the Fermat cubic surface, zero locus of $F$ in $\P^3(x_0:x_1:x_2:x_3)$ where 
$$F(x_0:x_1:x_2:x_3)=x_0^3+x_1^3+x_2^3+x_3^3$$
$X$ has automorphism group $\Z/3\Z \rtimes S_4$.

Let us check the point $L:=(0:0:0:1)$ is not uniform. In this case, the projection $\pi_L$ is the map
\begin{align*}
\pi_L: \P^3 \setminus L &\rightarrow \{x_3=0\}\\
(x_0:x_1:x_2:x_3) &\mapsto (x_0:x_1:x_2)
\end{align*}

A point $(x_0:x_1:x_2)$ belongs to the branch locus $B$ of $\pi_L$ if and only if it lies on the Fermat cubic curve  $\{x_0^3+x_1^3+x_2^3=0\}$. This shows that the monodromy is generated by $3$-cycles, hence $M(L)$ must be the alternating group $A_3$.

Moreover, we obtain four points that are not uniform by applying the automorphisms of the cubic surface. We will denote them by $L_i:=(x_j=0)$ for $i\neq j$. The ramification divisor $R_i$ of the point $L_i$ is the Fermat cubic $X \cap \Pi_i$ on the hyperplane $\Pi_i:=\{x_i=0\}$.

\begin{oss}
Notice that Lemma \ref{lemma:fillingfamily} can still be applied in this case, showing that the family $\cG_X$ is still filling. The contradiction used in Proposition \ref{propdimzero} concerns the family $\widetilde{\cG}_X$ that this time is no more filling because $\cP_X$ has dimension zero.
\end{oss}

\section*{Acknowledgements}

The first named author is supported by the Department of Mathematics of the University of Warwick.

The second named author is supported by the Department of Mathematics and Natural Sciences of the University of Stavanger in the framework of the grant 230986 of the Research Council of Norway. Part of this work was carried out during his visit to the Max Planck Institute for Mathematics in Bonn.

This project was first proposed to us by Gian Pietro Pirola and Lidia Stoppino during the graduate summer school \textit{Pragmatic 2016} held at the University of Catania, in Italy in June and July 2016. We would like to thank Pietro and Lidia for inviting us to this rich and fascinating problem and for the many insightful discussions throughout the process. We would also like to express our special thanks to Francesco Russo for all the meaningful and useful suggestions and to Davide Veniani for the remarks on a preliminary version of the paper. Lastly, but not least importantly, we would like to thank the organizing committee of the Pragmatic Summer School for realizing this wonderful learning opportunity.

\bibliographystyle{alpha}

\begin{thebibliography}{vOV07}

\bibitem[AP05]{ArPi}
Michela Artebani and Gian~Pietro Pirola.
\newblock Algebraic functions with even monodromy.
\newblock {\em Proc. Amer. Math. Soc.}, 133(2):331--341, 2005.

\bibitem[CC93]{CilChiant}
Luca Chiantini and Ciro Ciliberto.
\newblock A few remarks on the lifting problem.
\newblock {\em Ast\'erisque}, (218):95--109, 1993.
\newblock Journ{\'e}es de G{\'e}om{\'e}trie Alg{\'e}brique d'Orsay (Orsay,
  1992).

\bibitem[CF11]{CF}
Ciro Ciliberto and Flaminio Flamini.
\newblock On the branch curve of a general projection of a surface to a plane.
\newblock {\em Trans. Amer. Math. Soc.}, 363(7):3457--3471, 2011.

\bibitem[Col86]{CSJ}
Susan~Jane Colley.
\newblock Lines having specified contact with projective varieties.
\newblock In {\em Proceedings of the 1984 {V}ancouver conference in algebraic
  geometry}, volume~6 of {\em CMS Conf. Proc.}, pages 47--70. Amer. Math. Soc.,
  Providence, RI, 1986.

\bibitem[Cuk99]{C}
Fernando Cukierman.
\newblock Monodromy of projections.
\newblock {\em Mat. Contemp.}, 16:9--30, 1999.
\newblock 15th School of Algebra (Portuguese) (Canela, 1998).

\bibitem[GH79]{GH_LocalDifferential}
Phillip Griffiths and Joseph Harris.
\newblock Algebraic geometry and local differential geometry.
\newblock {\em Ann. Sci. \'Ecole Norm. Sup. (4)}, 12(3):355--452, 1979.

\bibitem[GM98]{GuMa}
Robert Guralnick and Kay Magaard.
\newblock On the minimal degree of a primitive permutation group.
\newblock {\em J. Algebra}, 207(1):127--145, 1998.

\bibitem[GN95]{GM_general}
Robert~M. Guralnick and Michael~G. Neubauer.
\newblock Monodromy groups of branched coverings: the generic case.
\newblock In {\em Recent developments in the inverse {G}alois problem
  ({S}eattle, {WA}, 1993)}, volume 186 of {\em Contemp. Math.}, pages 325--352.
  Amer. Math. Soc., Providence, RI, 1995.

\bibitem[GS07]{GuSh}
Robert~M. Guralnick and John Shareshian.
\newblock Symmetric and alternating groups as monodromy groups of {R}iemann
  surfaces. {I}. {G}eneric covers and covers with many branch points.
\newblock {\em Mem. Amer. Math. Soc.}, 189(886):vi+128, 2007.
\newblock With an appendix by Guralnick and R. Stafford.

\bibitem[Har79]{H_enum}
Joe Harris.
\newblock Galois groups of enumerative problems.
\newblock {\em Duke Math. J.}, 46(4):685--724, 1979.

\bibitem[Har80]{HarrisGenus}
Joe Harris.
\newblock The genus of space curves.
\newblock {\em Math. Ann.}, 249(3):191--204, 1980.

\bibitem[Har95]{HarrisAG}
Joe Harris.
\newblock {\em Algebraic geometry}, volume 133 of {\em Graduate Texts in
  Mathematics}.
\newblock Springer-Verlag, New York, 1995.
\newblock A first course, Corrected reprint of the 1992 original.

\bibitem[Hos97]{H}
Toshio Hosoh.
\newblock Automorphism groups of cubic surfaces.
\newblock {\em J. Algebra}, 192(2):651--677, 1997.

\bibitem[Mir95]{Mir}
Rick Miranda.
\newblock {\em Algebraic curves and {R}iemann surfaces}, volume~5 of {\em
  Graduate Studies in Mathematics}.
\newblock American Mathematical Society, Providence, RI, 1995.

\bibitem[Miu02]{Mi1}
Kei Miura.
\newblock Field theory for function fields of plane quintic curves.
\newblock {\em Algebra Colloq.}, 9(3):303--312, 2002.

\bibitem[MV04]{MagVol}
Kay Magaard and Helmut V\"olklein.
\newblock The monodromy group of a function on a general curve.
\newblock {\em Israel J. Math.}, 141:355--368, 2004.

\bibitem[MY00]{Mi2}
Kei Miura and Hisao Yoshihara.
\newblock Field theory for function fields of plane quartic curves.
\newblock {\em J. Algebra}, 226(1):283--294, 2000.

\bibitem[PS05]{PS}
Gian~Pietro Pirola and Enrico Schlesinger.
\newblock Monodromy of projective curves.
\newblock {\em J. Algebraic Geom.}, 14(4):623--642, 2005.

\bibitem[Ser86]{SernFam}
Edoardo Sernesi.
\newblock {\em Topics on families of projective schemes}, volume~73 of {\em
  Queen's Papers in Pure and Applied Mathematics}.
\newblock Queen's University, Kingston, ON, 1986.

\bibitem[vOV07]{VM}
Michael~A. van Opstall and R{\u{a}}zvan Veliche.
\newblock Variation of hyperplane sections.
\newblock In {\em Algebra, geometry and their interactions}, volume 448 of {\em
  Contemp. Math.}, pages 255--260. Amer. Math. Soc., Providence, RI, 2007.

\bibitem[Wie64]{FPG}
Helmut Wielandt.
\newblock {\em Finite permutation groups}.
\newblock Translated from the German by R. Bercov. Academic Press, New
  York-London, 1964.

\bibitem[Yos01]{Mi3}
Hisao Yoshihara.
\newblock Function field theory of plane curves by dual curves.
\newblock {\em J. Algebra}, 239(1):340--355, 2001.

\end{thebibliography}

\end{document}